\definecolor{blue3}{rgb}{.1,.0,.4}
\providecommand{\keywords}[1]{\textbf{\textit{Keywords:}} #1}
\providecommand{\subclass}[1]{\textbf{\textit{MSC classes:}} #1}
\declaretheorem[name=Theorem,numberwithin=section]{thm} 
\newtheorem{define}[thm]{Definition}
\newtheorem{lemma}[define]{Lemma}
\newtheorem{construction}[define]{Construction}
\newtheorem{prop}[define]{Proposition}
\newtheorem{remark}[define]{Remark}
\newtheorem{cor}[define]{Corollary}
\newtheorem{conjecture}[define]{Conjecture}
\newtheorem{example}[define]{Example}
\newcommand{\R}{\mathbb{R}}
\newcommand{\Z}{\mathbb{Z}}
\newcommand{\N}{\mathbb{N}}
\newcommand{\lip}{\operatorname{Lip}}
\newcommand{\dist}{\operatorname{dist}}
\newcommand{\id}{\operatorname{id}}
\newcommand{\leb}{\mathcal{L}}
\DeclareMathOperator{\disp}{disp}
\DeclareMathOperator{\diam}{diam}
\newcommand{\mb}[1]{\mathbf{#1}}
\newcommand{\mc}[1]{\mathcal{#1}}
\newcommand{\niceint}[3]{\int_{#1}{#2\,\mathrm{d}#3}}
\newcommand{\abs}[1]{\left|#1\right|}
\newcommand{\lnorm}[2]{\left\|#2\right\|_#1}
\newcommand{\enorm}[1]{\lnorm{2}{#1}}
\newcommand{\set}[1]{\left\{#1\right\}}
\newcommand{\cl}[1]{\overline{#1}}
\newcommand{\rest}[2]{#1|_{#2}}
\DeclarePairedDelimiter{\floor}{\lfloor}{\rfloor}
\DeclarePairedDelimiter{\br}{(}{)}
\title{Divergence of separated nets with respect to displacement equivalence\thanks{The authors acknowledge the support of Austrian Science Fund (FWF): P 30902-N35. The work was started when both authors were employed at University of Innsbruck and continued when the first named author was employed at University of Leipzig and the second named author was employed at Institute of Science and Technology of Austria, where he was supported by an IST Fellowship.}}
\author{
	Michael Dymond\,\orcidlink{0000-0002-1900-3549}
	\\ \small{School of Mathematics}
	\\ \small{Watson Building}
	\\ \small{University of Birmingham}
	\\ \small{Edgbaston, Birmingham B15 2TT, UK}
	\\ \small{\href{m.dymond@bham.ac.uk}{m.dymond@bham.ac.uk}}
	\and
	Vojt\v ech Kalu\v za\,\orcidlink{0000-0002-2512-8698}
	\\ \small{Institute of Science and Technology Austria}
	\\ \small{Am Campus 1}
	\\ \small{3400 Klosterneuburg, Austria}
	\\ \small{\href{mailto:vojtech.kaluza@ist.ac.at}{vojtech.kaluza@ist.ac.at}}
}
\date{}
\begin{document}
	\maketitle
	\abstract{We introduce a hierarchy of equivalence relations on the set of separated nets of a given Euclidean space, indexed by concave increasing functions $\phi\colon (0,\infty)\to(0,\infty)$. Two separated nets are called \emph{$\phi$-displacement equivalent} if, roughly speaking, there is a bijection between them which, for large radii $R$, displaces points of norm at most $R$ by something of order at most $\phi(R)$. We show that the spectrum of $\phi$-displacement equivalence spans from the established notion of \emph{bounded displacement equivalence}, which corresponds to bounded $\phi$, to the indiscrete equivalence relation, corresponding to $\phi(R)\in \Omega(R)$, in which all separated nets are equivalent. In between the two ends of this spectrum, the notions of $\phi$-displacement equivalence are shown to be pairwise distinct with respect to the asymptotic classes of $\phi(R)$ for $R\to\infty$. We further undertake a comparison of our notion of $\phi$-displacement equivalence with previously studied relations on separated nets. Particular attention is given to the interaction of the notions of $\phi$-displacement equivalence with that of \emph{bilipschitz equivalence}.}
	
	\medskip
	\keywords{separated net, displacement equivalence, bilipschitz equivalence, $\omega$-regularity, density}
	
	\subclass{51F99, 51M05, 52C99, 26B35, 26B10}

%%%%%%%%%%%%%%%%%%%%%%%%%%%%%%%%%%%
%%%%%%% 	Intro section	%%%%%%%
%%%%%%%%%%%%%%%%%%%%%%%%%%%%%%%%%%%
	\section{Introduction}
In the present work, we compare the metric structures of separated nets by examining how much mappings between them displace points. The notion of displacement of a mapping is defined as follows:

\begin{define}
	Let $f\colon A\subseteq\R^d\to\R^d$. We define the \emph{displacement constant} of $f$ as
	\[
	\disp(f):=\lnorm{\infty}{f-\id}.
	\]
	If $\disp(f)<\infty$, then we say that $f$ is a mapping of \emph{bounded displacement}. 
\end{define}

Research into separated nets in Euclidean spaces has broadly centred around the question of to what extent any two separated nets in a Euclidean space are similar, as metric spaces. To formulate this question more precisely, it is necessary to prescribe what it means for two separated nets to be considered similar, or put differently, to define a symmetric relation on the class of separated nets in a Euclidean space. Two such notions, which are in fact equivalence relations, have been studied most prominently. 

The most narrow of these notions is that of bounded displacement equivalence. Two separated nets $X,Y\subseteq \R^{d}$ are said to be \emph{bounded displacement equivalent}, or \emph{BD equivalent}, if there exists a bijection $f\colon X\to Y$ for which $\disp(f)<\infty$. To demonstrate how constrictive BD equivalence is, we point out that for any separated net $X\subseteq \R^{d}$, $X$ is not $BD$ equivalent to $2X$.\footnote{The reader may wish to verify this as an exercise; alternatively we note that this fact is a special case of Proposition~\ref{prop:nat_dens_different} of the present work.} Hence, even linear bijections $\R^{d}\to\R^{d}$ may transform a separated net to a BD non-equivalent separated net.

For the second notion, two separated nets $X,Y\subseteq \R^{d}$ are called \emph{bilipschitz equivalent}, or \emph{BL equivalent}, if there is a bilipschitz bijection $f\colon X\to Y$. This defines a much looser form of equivalence in comparison to BD equivalence.  In fact, it is a highly non-trivial question, posed by Gromov~\cite{Grom} in 1993, whether BL equivalence distinguishes at all between the separated nets of a  multidimensional Euclidean space. Moreover, we point out that BD equivalence is easily seen to be stronger than BL equivalence.

For all Euclidean spaces of dimension at least two, Gromov's question was answered negatively in 1998 by Burago and Kleiner~\cite{BK1} and (independently) McMullen~\cite{McM}; the papers \cite{BK1} and \cite{McM} verify the existence of a separated nets in $\R^{d}$, $d\geq 2$, which are not BL equivalent to the integer lattice. 

In the recent work \cite{dymond_kaluza2019highly}, the authors introduce the notion of $\omega$-regularity of a separated net. 

\begin{define}\label{def:omeg_reg}
Given separated nets $X,Y\subseteq \R^{d}$ and a strictly increasing, concave function $\omega$ defined on a positive open interval starting at $0$ and satisfying $\lim_{t\to 0}\omega(t)=0$, a mapping $f\colon X\to Y$ is called a \emph{homogeneous $\omega$-mapping} if there are constants $K>1$ and $a>0$ such that
\[
\lnorm{2}{f(y)-f(x)}\leq KR\omega\left(\frac{\lnorm{2}{y-x}}{R}\right)
\]
for all $R>0$ and $x,y\in X\cap \overline{B}(0,R)$ with $\lnorm{2}{y-x}<aR$. The separated net $X\subseteq \R^{d}$ is called \emph{$\omega$-regular with respect to the separated net $Y\subseteq \R^{d}$} if there exists a bijection $f\colon X\to Y$ such that both $f$ and $f^{-1}$ are homogeneous $\omega$-mappings. Otherwise $X$ is called \emph{$\omega$-irregular with respect to $Y$}. In the case that $Y=\Z^{d}$, these terms are shortened to \emph{$\omega$-regular} and \emph{$\omega$-irregular} respectively.
\end{define}
From now on, we will refer to functions $\omega$ with the properties given in Definition~\ref{def:omeg_reg} as \emph{moduli of continuity}. The function $\omega(t)=t$ will be called the \emph{Lipschitz modulus of continuity} and functions $\omega(t)=t^{\beta}$ with $\beta\in(0,1)$ will be referred to as \emph{H\"older moduli of continuity}. When we prescribe a modulus of continuity $\omega$ by a formula such as $\omega(t)=t^{\beta}$, it should be understood that this formula defines $\omega$ on some interval $(0,a)$ with $a>0$. The precise value of $a$ and indeed the behaviour of $\omega(t)$ for $t\geq a$ is irrelevant to the notions of Definition~\ref{def:omeg_reg}.

It is clear that for two moduli of continuity $\omega_{1}$, $\omega_{2}$ satisfying $\omega_{2}(t)\in o(\omega_{1}(t))$ for $t\to 0$,\footnote{We use the standard asymptotic notation $O, o, \Omega$ and $\Theta$; for the definitions, see Section~\ref{sec:prelim}.}
the notion of $\omega_{1}$-regularity is formally weaker than that of $\omega_{2}$-regularity. Further for the Lipschitz modulus of continuity $\omega(t)=t$, $\omega$-regularity of $X$ with respect to $Y$ is nothing other than the BL equivalence of $X$ and $Y$. Thus, the result of Burago and Kleiner and (independently) McMullen discussed above can be formulated as follows: In every Euclidean space $\R^{d}$ with $d\geq 2$ there exists an $\omega$-irregular separated net for the function $\omega(t)=t$.

The notion of $\omega$-regularity of separated nets is motivated by a result of McMullen~\cite[Theorem~5.1]{McM}, which stands in contrast to the existence of BL non-equivalent nets. McMullen~\cite{McM} proves that for any two separated nets $X$ and $Y$ in Euclidean space, $X$ is $\omega$-regular with respect to $Y$ for some Hölder modulus of continuity $\omega(t)=t^{\beta}$ for some $\beta\in (0,1)$. 
In the work \cite{dymond_kaluza2019highly}, the present authors investigate $\omega$-regularity for $\omega$ lying asymptotically in between the Lipschitz modulus of continuity and Hölder moduli of continuity. The paper \cite{dymond_kaluza2019highly} proves that there are separated nets in every $\R^{d}$, $d\geq 2$, which are $\omega$-irregular for the modulus of continuity 
\begin{equation}\label{eq:our_omega}
\omega(t)=t\left(\log\frac{1}{t}\right)^{\alpha_{0}},
\end{equation}
where $\alpha_{0}=\alpha_{0}(d)$ is a positive constant determined by the dimension $d$ of the space. This is formally a stronger result than the existence of BL non-equivalent separated nets.

\subsection*{Growth of restricted displacement constants.}
 Looking at the value of $\disp(f)$ for bijections $f$ between two separated nets $X$ and $Y$ gives only a very crude comparison of their metric structures. Roughly speaking `most' pairs of separated nets $X$ and $Y$ are BD non-equivalent, so that $\disp(f)=\infty$ for every such bijection. This motivates a more subtle form of metric comparison of separated nets in Euclidean space via displacement:
 
\begin{define}\label{def:dispR}
	Let $f\colon A\subseteq\R^{d}\to \R^{d}$. We define a function $(0,\infty)\to [0,\infty)$ by 
	\[
	R\mapsto\disp_R(f):=\begin{cases}
	\disp(\rest{f}{A\cap\cl{B}(\mb{0},R)})& \text{ if $A\cap \cl{B}(\mb{0},R)\neq \emptyset$},\\
	0 & \text{otherwise}.
	\end{cases}
	\]
\end{define}

Although we expect generally that $\disp(f)=\infty$ for any bijection between two separated nets, so that $\lim_{R\to \infty}\disp_{R}(f)=\infty$, it remains of interest in such cases to determine the optimal asymptotic growth of $\disp_{R}(f)$ as $R\to \infty$ among such bijections. Indeed, this allows for a more flexible notion of displacement equivalence. 

\begin{define}\label{def:disp_eq}
	Let $\phi\colon (0,\infty)\to (0,\infty)$ be an increasing, concave function and $X$ and $Y$ be separated nets of $\R^{d}$. We say that $X$ and $Y$ are $\phi$-displacement equivalent if there exists a bijection $f\colon X\to Y$ for which $\disp_{R}(f)\in O(\phi(R))$. 
\end{define}

\begin{remark}
	In Definition~\ref{def:dispR} and Definition~\ref{def:disp_eq} it may appear that the origin $\mb{0}\in\R^{d}$ has a special role: it is the reference point with respect to which the quantity $\disp_{R}(f)$ is defined. It is therefore natural to ask, whether a different choice of reference point in Definition~\ref{def:dispR} would give rise to a different notion of $\phi$-displacement equivalence in Definition~\ref{def:disp_eq}. However, this is not the case, due to the conditions on the functions $\phi$ admitted in Definition~\ref{def:disp_eq} and the inequality
	\[
	\disp_{R}^{y}(f)\leq \disp_{R+\enorm{z-y}}^{z}(f),
	\] 
	where $\disp_{R}^{w}(f)$ denotes the quantity of Definition~\ref{def:dispR} obtained when $w\in\R^{d}$ is used as the reference point instead of $\mb{0}\in\R^{d}$.
\end{remark}

\begin{remark}\label{rem:concave}
	We require the concavity of $\phi$ in Definition~\ref{def:disp_eq} in order to verify that $\phi$-displacement equivalence is a true equivalence relation. However, the reader may ask whether it is possible to admit a larger class of functions $\phi$.
	It is the authors' view that admitting only concave functions $\phi$ in Definition~\ref{def:disp_eq} is not a major restriction. Recall that for every increasing function $\psi\colon (0,\infty)\to (0,\infty)$ with $\psi\in O(R)$ there is a concave majorant, that is, a concave increasing function $\phi\colon (0,\infty)\to (0,\infty)$ such that $\psi\leq \phi$ pointwise and $\psi(R)\notin o(\phi(R))$; see Lemma~\ref{lemma:concave_maj}. 
		
Observe that the concave condition in Definition~\ref{def:disp_eq} implies that $\phi(R)\in O(R)$ and thus superlinear functions such as $\phi(R)=R^{2}$ are excluded. However, excluding superlinear functions $\phi$ is not any restriction because, were they to be admitted, then the resulting notions of $\phi(R)$-displacement equivalence for all functions $\phi(R)\in\Omega(R)$ would coincide and equal the trivial equivalence relation in which all separated nets of $\R^{d}$ are equivalent. This last assertion is a consequence of Proposition~\ref{p:disp_always_linear} of the present work.
\end{remark}

\subsection*{Structure of the Paper and Main Results} To finish this introduction, we outline the structure of the paper and summarise the main contributions of the present work. 

Section~2 and 3 present preliminary results and observations which can mostly be thought of as easy consequences of the new definition of $\phi$-displace\-ment equivalence, but are nevertheless worth highlighting in view of the authors. In Section 2 we
verify that the notions of $\phi$-displacement equivalence given by Definition~\ref{def:disp_eq} are equivalence relations:
\begin{restatable*}{prop}{equivrel}\label{prop:equivrel}
		Let $\phi\colon (0,\infty)\to(0,\infty)$ be an increasing, concave function. Then the notion of $\phi$-displacement equivalence of separated nets in $\R^{d}$ given by Definition~\ref{def:disp_eq} is an equivalence relation on the set of separated nets of $\R^{d}$.
\end{restatable*}

We further show that the notion of $\phi$-displacement equivalence for $\phi(R)\in \Omega(R)$ does not distinguish between separated nets:
\begin{restatable*}{prop}{linear}\label{p:disp_always_linear}
	Let $X,Y$ be two separated nets in $\R^d$. Then there is a bijection $f\colon X\to Y$ such that $\disp_R(f),\disp_R(f^{-1})\in O(R)$.
\end{restatable*}
In contrast, Section 3 deals with negative results and identifies certain barriers to $\phi$-displacement equivalence for $\phi\in o(R)$. 

Our first main result demonstrates that the notions of $\phi$-displacement equivalence for increasing, concave functions $\phi\colon (0,\infty)\to(0,\infty)$ form a fine spectrum starting from the strictest form of $\phi$-displacement equivalence, namely BD equivalence, which corresponds to $\phi$-equivalence for bounded $\phi(R)\in O(1)$, to the weakest form of $\phi$-displacement equivalence, namely that corresponding to $\phi(R)\in\Omega(R)$. In the spectrum between $O(1)$ and $\Omega(R)$ we show that the notions of $\phi$-displacement equivalence are pairwise distinct with respect to the asymptotic classes of functions $\phi(R)$ for $R\to\infty$. We prove namely the following statement:
\begin{restatable*}{thm}{dispcls}\label{thm:displacement_class}
	Let $\phi\colon (0,\infty)\to (0,\infty)$ be an increasing, concave function with $\phi(R)\in o(R)$ and $X\subseteq \R^{d}$ be a separated net. Then there exists a separated net $Y\subseteq \R^{d}$ such that every bijection $f\colon X\to Y$ satisfies $\disp_{R}(f)\notin  o(\phi(R))$ and there exists a bijection $g\colon X\to Y$ with $\disp_{R}(g),\disp_{R}(g^{-1})\in O(\phi(R))$. Moreover, such $Y$ can be found so that $X$ and $Y$ are bilipschitz equivalent.
\end{restatable*}
\begin{cor}\label{cor:dist_disp_eq}
	Let $\phi_{1},\phi_{2}\colon (0,\infty)\to (0,\infty)$ be increasing, concave functions with $\phi_{1}(R)\in o(\phi_{2}(R))$. Then $\phi_{2}$-displacement equivalence of separated nets in $\R^{d}$ is a strictly weaker notion than that of $\phi_{1}$-displacement equivalence.
\end{cor}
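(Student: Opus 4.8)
The plan is to deduce the corollary from Theorem~\ref{thm:displacement_class}, with only elementary manipulations of asymptotic growth classes needed for the rest. I would split the claim into two parts: \emph{(i)} every $\phi_1$-displacement equivalent pair of separated nets is $\phi_2$-displacement equivalent, and \emph{(ii)} the converse fails, so that the two equivalence relations really differ.

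Part \emph{(i)} is a one-line observation. If $f\colon X\to Y$ is a bijection with $\disp_R(f)\in O(\phi_1(R))$, then since $\phi_1(R)\in o(\phi_2(R))\subseteq O(\phi_2(R))$ we also have $\disp_R(f)\in O(\phi_2(R))$; hence $X$ and $Y$ are $\phi_2$-displacement equivalent. Thus $\phi_2$-displacement equivalence is at least as weak as $\phi_1$-displacement equivalence.

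For part \emph{(ii)} I would first fix an auxiliary modulus $\psi\colon(0,\infty)\to(0,\infty)$, increasing and concave, with $\phi_1(R)\in o(\psi(R))$, $\psi(R)\in O(\phi_2(R))$ and $\psi(R)\in o(R)$. If $\phi_2(R)\in o(R)$ one simply takes $\psi=\phi_2$. Otherwise, applying Remark~\ref{rem:concave} to the pair $\phi_2,\id$ yields $\phi_2(R)\in\Omega(R)$, and combined with concavity this gives $\phi_2(R)\in\Theta(R)$; since then $\phi_1(R)\in o(\phi_2(R))\subseteq o(R)$, one can take $\psi(R)=\sqrt{R\,\phi_1(R)}$, which is positive, increasing and concave because the pointwise geometric mean of two positive, increasing, concave functions has these properties, and which satisfies $\phi_1(R)/\psi(R)=\psi(R)/R=\sqrt{\phi_1(R)/R}\to 0$. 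Either way $\psi$ has the three stated properties. Now apply Theorem~\ref{thm:displacement_class} with this $\psi$ in place of $\phi$ and with $X$ any separated net of $\R^d$: it produces a separated net $Y\subseteq\R^d$ and a bijection $g\colon X\to Y$ with $\disp_R(g)\in O(\psi(R))\subseteq O(\phi_2(R))$, so that $X$ and $Y$ are $\phi_2$-displacement equivalent, while at the same time every bijection $f\colon X\to Y$ satisfies $\disp_R(f)\notin o(\psi(R))$. If $X$ and $Y$ were $\phi_1$-displacement equivalent, there would be a bijection $f$ with $\disp_R(f)\in O(\phi_1(R))\subseteq o(\psi(R))$, a contradiction. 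Hence the pair $X,Y$ witnesses that $\phi_2$-displacement equivalence is \emph{strictly} weaker than $\phi_1$-displacement equivalence, completing the proof.

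The one point requiring a little care—and the only real obstacle—is that Theorem~\ref{thm:displacement_class} is stated only for $\phi(R)\in o(R)$, whereas $\phi_2$ itself need not be $o(R)$: when $\phi_2(R)\in\Theta(R)$ the relation of $\phi_2$-displacement equivalence is already the trivial one by Proposition~\ref{p:disp_always_linear}, and one must invoke the theorem at the intermediate rate $\psi$ rather than at $\phi_2$. Choosing a $\psi$ sandwiched strictly between $\phi_1$ and the linear rate is exactly what the geometric-mean construction achieves, and the concavity hypotheses (through Remark~\ref{rem:concave}) guarantee that all the $o(\cdot)$ and $O(\cdot)$ comparisons used above are meaningful.
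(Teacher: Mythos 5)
Your proposal is correct and follows the same route as the paper, which simply declares Corollary~\ref{cor:dist_disp_eq} an immediate consequence of Theorem~\ref{thm:displacement_class}: apply the theorem to a suitable rate, note $O(\phi_1(R))\subseteq o(\phi_2(R))$, and conclude. Your only addition is the careful treatment of the case $\phi_2(R)\in\Theta(R)$, where the theorem cannot be invoked with $\phi_2$ itself and your intermediate concave rate $\psi(R)=\sqrt{R\,\phi_1(R)}$ (or, equivalently, Proposition~\ref{p:disp_always_linear} plus the theorem applied to any such $\psi$) fills in a step the paper leaves implicit.
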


Theorem~\ref{thm:displacement_class} will be proved in Section~\ref{sec:disp_class}; Corollary~\ref{cor:dist_disp_eq} is an immediate consequence of Theorem~\ref{thm:displacement_class}. Note that Theorem~\ref{thm:displacement_class} also verifies the optimality of Proposition~\ref{p:disp_always_linear}.

The theme of Sections~\ref{sec:BD_class} and \ref{sec:BL_inter_disp} is the comparison of the established notion of BL equivalence with the spectrum of $\phi$-displacement equivalence for increasing, concave functions $\phi\colon (0,\infty)\to(0,\infty)$. We begin, in Section~\ref{sec:BD_class}, with the strictest form of $\phi$-displacement equivalence, namely BD equivalence. In Section~\ref{sec:BL_inter_disp} we then move onto $\phi$-displacement equivalence for unbounded $\phi$.

We compare the notions of BL equivalence and $\phi$-displacement equivalence by looking at the intersection of the BL equivalence classes with the classes of $\phi$-displacement equivalence. The cardinality of the set of equivalence classes of separated nets has already attracted some research attention. Magazinov~\cite{Magazinov} shows that in every Euclidean space of dimension at least two, the set of BL equivalence classes of separated nets has the cardinality of the continuum. Since BD equivalence is stronger than BL equivalence, this also implies that there are uncountably many distinct BD classes. In \cite[Theorem~1.3]{Frettloeh_et_al-BD2019}, Frettlöh, Smilansky and Solomon also verify the existence of uncountably many, pairwise distinct BD equivalence classes of separated nets in $\R^{2}$. Interestingly, the class representatives of the uncountably many, pairwise distinct BD equivalence classes given in \cite{Frettloeh_et_al-BD2019} all come from the same BL equivalence class.

Independently of the aforementioned works \cite{Magazinov} and \cite{Frettloeh_et_al-BD2019}, we are able to verify that every Euclidean space has uncountably many, pairwise distinct BD equivalence classes of separated nets. Further, we provide new information, namely that there are uncountably many pairwise distinct BD equivalence classes inside each BL equivalence class. Hence, we are able to present a new result, which we prove in Section~\ref{sec:BD_class}:
\begin{restatable*}{thm}{BDclasses}\label{thm:BD_equiv_classes}
For every $d\in \N$, every bilipschitz equivalence class of separated nets in $\R^{d}$ decomposes as a union of uncountably many pairwise distinct bounded displacement equivalence classes.	
\end{restatable*}
For unbounded functions $\phi(R)$, the analysis of the interaction of the BL classes and the $\phi$-displacement equivalence classes of separated nets in $\R^{d}$ is more challenging. In light of Theorem~\ref{thm:BD_equiv_classes}, the natural problem is to characterise the increasing, concave functions $\phi(R)\in o(R)$ for which $\phi$-displacement equivalence is stronger than BL equivalence; note that Theorem~\ref{thm:BD_equiv_classes} takes care of the functions $\phi(R)\in O(1)$. In Section~\ref{sec:BL_inter_disp} we resolve this matter. We verify, namely, that $\phi$-displacement is stronger than BL equivalence if and only if $\phi(R)\in O(1)$. In particular, this means that BD equivalence is the only form of $\phi$-displacement equivalence for which Theorem~\ref{thm:BD_equiv_classes} holds.

Section~\ref{sec:BL_inter_disp} should also be placed in the context of $\omega$-regularity of separated nets. Recall that BL equivalence corresponds to the notion of $\omega$-regularity for the modulus of continuity $\omega(t)=t$. For the weaker modulus of continuity $\omega$ of \eqref{eq:our_omega} and any function $\phi(R)\in O\left(R\omega\left(\frac{1}{R}\right)\right)$,
the authors prove in \cite{dymond_kaluza2019highly} that the $\phi$-displacement equivalence class of the integer lattice does not contain any $\omega$-irregular separated nets. This may support the following conjecture:
\begin{conjecture}\label{conject:omega_phi}
	Let $d\geq 2$, $\omega$ be a modulus of continuity in the sense of Definition~\ref{def:omeg_reg} and $\phi\colon(0,\infty)\to(0,\infty)$ be an increasing concave function. Then the class of $\omega$-irregular separated nets in $\R^{d}$ has non-empty intersection with the $\phi$-displacement equivalence class of the integer lattice $\Z^{d}$ if and only if $R\omega\left(\frac{1}{R}\right)\in o(\phi(R))$.
\end{conjecture}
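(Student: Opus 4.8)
The plan is to establish both implications of the biconditional after a preliminary reduction. Since $\omega$ is concave and increasing, its perspective transform $R\mapsto R\omega(1/R)$ is again concave and increasing, so Remark~\ref{rem:concave} applies to the pair $R\omega(1/R)$ and $\phi$: exactly one of $R\omega(1/R)\in o(\phi(R))$ or $\phi(R)\in O(R\omega(1/R))$ holds. Hence the negation of the right-hand condition is precisely $\phi(R)\in O(R\omega(1/R))$, and I would phrase the two directions in these terms.

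For the necessity (``only if'') direction I would prove the contrapositive: if $\phi(R)\in O(R\omega(1/R))$, then every separated net that is $\phi$-displacement equivalent to $\Z^d$ is $\omega$-regular, so the $\phi$-class of $\Z^d$ meets no $\omega$-irregular net. Let $f\colon\Z^d\to Y$ be a bijection with $\disp_R(f)\in O(\phi(R))\subseteq O(R\omega(1/R))$, say $\disp_R(f)\le C'R\omega(1/R)$ for large $R$. For distinct $x,y\in\Z^d\cap\overline{B}(0,R)$ with $s:=\norm{y-x}<aR$ the triangle inequality gives $\norm{f(y)-f(x)}\le s+2\disp_R(f)$. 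Two elementary consequences of concavity finish the estimate: since $t\mapsto\omega(t)/t$ is non-increasing, $s\le\frac{a}{\omega(a)}R\omega(s/R)$; and since $\omega$ is increasing and $s\ge 1$, $R\omega(1/R)\le R\omega(s/R)$. Together these bound $\norm{f(y)-f(x)}$ by a fixed multiple of $R\omega(s/R)$, so $f$ is a homogeneous $\omega$-mapping in the sense of Definition~\ref{def:omeg_reg}. A short argument using $\phi(R)\in o(R)$ then shows $\disp_R(f^{-1})\in O(\phi(R))$ as well (a point of $Y\cap\overline{B}(0,R)$ has $f$-preimage in $\overline{B}(0,2R)$ for large $R$, and $\phi(2R)\le 2\phi(R)$ by concavity), and the same computation, with the separation constant of $Y$ in place of $1$, makes $f^{-1}$ a homogeneous $\omega$-mapping. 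Thus $Y$ is $\omega$-regular. This generalises to arbitrary moduli $\omega$ the special case of \eqref{eq:our_omega} already treated in \cite{dymond_kaluza2019highly}.

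The sufficiency (``if'') direction is the substantial part, and I expect it to be the main obstacle. Assuming $R\omega(1/R)\in o(\phi(R))$, I must exhibit an $\omega$-irregular net $Y$ inside the $\phi$-displacement class of $\Z^d$. The route is to take the hierarchical density-defect construction producing $\omega$-irregular nets in \cite{dymond_kaluza2019highly} --- presently carried out only for the modulus \eqref{eq:our_omega} and hence to be extended first to general $\omega$ --- and to run it as economically as possible while tracking the displacement of the resulting bijection from $\Z^d$. Creating a density defect of magnitude $D$ in a cube of side $L$ at scale $R$ both forces and can be realised with displacement of order $D/L^{d-1}$ across the relevant region; the construction should therefore be tuned so that the defects obstructing $\omega$-regularity at scale $R$ translate into displacement $O(\eta(R))$ for some $\eta$ with $R\omega(1/R)\lesssim\eta(R)\le\phi(R)$. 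Such an $\eta$ exists precisely because $R\omega(1/R)\in o(\phi(R))$ leaves an asymptotic gap (for instance $\eta=\psi\cdot R\omega(1/R)$ with $\psi\to\infty$ growing slowly enough), whence $Y$ would be $\phi$-displacement equivalent to $\Z^d$ by design. Note that Theorem~\ref{thm:displacement_class} cannot be invoked as a shortcut here, since the net it produces is bilipschitz equivalent to $\Z^d$ and thus $\omega$-regular.

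The delicate heart of the matter, and the step I expect to be hardest, is that the necessity direction already shows displacement $O(R\omega(1/R))$ forces $\omega$-regularity; the construction must therefore operate just above this threshold, using displacement that is genuinely super-$R\omega(1/R)$ yet still $o(\phi)$, and one must prove that this marginal excess already suffices to destroy $\omega$-regularity. This calls for a sharp, modulus-adapted refinement of the density/Jacobian obstructions of Burago--Kleiner~\cite{BK1} and McMullen~\cite{McM}: one needs to quantify exactly how large a multiscale density fluctuation must be, relative to $\omega$, so that no bijection $g\colon\Z^d\to Y$ can make both $g$ and $g^{-1}$ satisfy the homogeneous $\omega$-mapping inequality, and then match this threshold to the displacement budget $\phi$. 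Controlling the interaction across the infinitely many scales $R_k$ while keeping the displacement within $O(\phi)$ at \emph{every} radius simultaneously --- not merely along the sequence $R_k$ --- is the principal technical difficulty, and is presumably why the statement is posed as a conjecture rather than proved.
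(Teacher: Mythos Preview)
The statement you are addressing is labelled \emph{Conjecture} in the paper, and the paper offers no proof of it. The authors only record two partial confirmations: the ``only if'' implication for the specific modulus $\omega$ of \eqref{eq:our_omega}, which they attribute to \cite[Proposition~1.3]{dymond_kaluza2019highly}, and the full biconditional for the Lipschitz modulus $\omega(t)=t$, which they derive from Theorem~\ref{thm:bil_neq_slow_disp} together with the implication BD $\Rightarrow$ BL. Everything else is left open. Your closing remark already recognises this, so the honest summary of your proposal is: a complete argument for one implication, and an outline---not a proof---for the other.

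Your ``only if'' argument is correct and in fact strictly stronger than anything the paper asserts: you establish that implication for \emph{every} modulus $\omega$, whereas the paper only invokes it for the single modulus of \eqref{eq:our_omega}. The ingredients you use (concavity of $R\mapsto R\omega(1/R)$, the dichotomy from Remark~\ref{rem:concave}, the bound $s\le \tfrac{a}{\omega(a)}R\omega(s/R)$ from monotonicity of $\omega(t)/t$, and the lower bound $s\ge 1$ coming from separation) are all legitimate, and the passage to $f^{-1}$ via Proposition~\ref{obs:BD_of_func_and_its_inverse} is exactly right. This is a genuine contribution beyond the paper.

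For the ``if'' direction, there is no gap to name because you do not claim to have a proof: your paragraph is a programme, not an argument. The difficulty you isolate is the real one---once $\phi$ exceeds $R\omega(1/R)$ only marginally, one must produce density defects large enough to obstruct $\omega$-regularity yet realisable with displacement still in $O(\phi)$, and the existing constructions in \cite{BK1}, \cite{McM}, \cite{dymond_kaluza2019highly} have not been tuned this sharply for general $\omega$. That is precisely why the paper states this as a conjecture; the only case the paper settles, $\omega(t)=t$, succeeds because Theorem~\ref{thm:bil_neq_slow_disp} supplies BL-irregular nets with arbitrarily slow unbounded displacement growth, and there the threshold $R\omega(1/R)\equiv 1$ is as forgiving as possible.
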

Indeed the `only if' implication of Conjecture~\ref{conject:omega_phi} for the modulus of continuity $\omega$ of \eqref{eq:our_omega} is precisely the result \cite[Proposition~1.3]{dymond_kaluza2019highly} referred to above. In Section~\ref{sec:BL_inter_disp} of the present work, we show that for every increasing, unbounded, concave function $\phi\colon (0,\infty)\to(0,\infty)$, the $\phi$-displacement class of the integer lattice intersects distinct BL classes; in particular it contains $\omega$-irregular separated nets for $\omega(t)=t$. A matter of interest is whether every such $\phi$-displacement equivalence class intersects every BL equivalence class. This question remains open, but we are able to show that every such $\phi$-displacement equivalence class intersects uncountably many BL equivalence classes:
\begin{restatable*}{thm}{slowdisp}\label{thm:bil_neq_slow_disp}
	Let $d\geq 2$ and $\phi\colon (0,\infty)\to(0,\infty)$ be an unbounded, increasing, concave function. Then there is an uncountable family $(X_{\psi})_{\psi\in\Lambda}$ of pairwise bilipschitz non-equivalent separated nets in $\R^{d}$ for which each $X_{\psi}$ is $\phi$-displacement equivalent to $\Z^{d}$. 
\end{restatable*}
We point out that Theorem~\ref{thm:bil_neq_slow_disp} is a refinement of the lower bound from \cite{Magazinov} and is obtained entirely independently.
Moreover, put together with the fact that BD equivalence is stronger than BL equivalence, Theorem~\ref{thm:bil_neq_slow_disp} verifies Conjecture~\ref{conject:omega_phi} for the special case of the Lipschitz modulus of continuity $\omega(t)=t$. We further remark that equivalence of separated nets and cardinality of sets of their equivalence classes have been studied in connection with the notion of repetitivity of separated nets; see \cite{CortezNavas} and \cite{smilansky_solomon_2022}. Our methods for constructing separated nets verifying Theorem~\ref{thm:bil_neq_slow_disp} appear to destroy repetitivity and so there seem to be serious obstructions to employing them to, for example, prove a BL equivalence version of the dichotomy~\cite[Theorem~1.1]{smilansky_solomon_2022} for BD equivalence.

At this point, we also wish to state formally the characterisation announced in the above discussion of Section~\ref{sec:BL_inter_disp}. This result is an immediate consequence of Theorem~\ref{thm:BD_equiv_classes} and Theorem~\ref{thm:bil_neq_slow_disp}:
\begin{restatable}{thm}{phidispBL}\label{thm:phidispBL}
	Let $d\geq 2$ and $\phi\colon (0,\infty)\to (0,\infty)$ be an increasing, concave function. Then, $\phi$-displacement equivalence of separated nets in $\R^{d}$ is stronger than bilipschitz equivalence if and only if $\phi$ is bounded.
\end{restatable}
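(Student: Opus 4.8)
The plan is to derive Theorem~\ref{thm:phidispBL} by assembling Theorem~\ref{thm:BD_equiv_classes} and Theorem~\ref{thm:bil_neq_slow_disp}, treating the cases of bounded and unbounded $\phi$ separately. I read ``stronger'' in the strict sense here (if $\phi$-displacement equivalence were merely required to imply bilipschitz equivalence, Theorem~\ref{thm:BD_equiv_classes} would not be needed), so the goal is to show that for bounded $\phi$ the relation of $\phi$-displacement equivalence refines bilipschitz equivalence and does so properly, whereas for unbounded $\phi$ it fails to refine bilipschitz equivalence at all.

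First I would dispose of the case of bounded $\phi$. The key preliminary observation is that, since $\phi$ is increasing with values in $(0,\infty)$, boundedness forces $\phi(R)$ to lie between two positive constants for all large $R$; hence, for any bijection $f$ between separated nets, the condition $\disp_{R}(f)\in O(\phi(R))$ of Definition~\ref{def:disp_eq} is equivalent to $\disp(f)<\infty$. Consequently $\phi$-displacement equivalence \emph{is} bounded displacement equivalence. I would then recall two ingredients: that BD equivalence implies BL equivalence (a bijection of bounded displacement between two separated nets is easily seen to be bilipschitz, using uniform discreteness and relative density of the nets — this implication is in any case implicit in the very statement of Theorem~\ref{thm:BD_equiv_classes}), and that by Theorem~\ref{thm:BD_equiv_classes} each bilipschitz equivalence class of separated nets in $\R^{d}$ splits into uncountably many — in particular at least two — distinct BD equivalence classes, so there are bilipschitz equivalent nets that are not BD equivalent. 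Together these say exactly that $\phi$-displacement equivalence is strictly stronger than bilipschitz equivalence when $\phi$ is bounded.

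For the converse I would argue the contrapositive: if $\phi$ is unbounded, then $\phi$-displacement equivalence is not stronger than bilipschitz equivalence. Here I would apply Theorem~\ref{thm:bil_neq_slow_disp}, which is available precisely because $d\geq 2$ and $\phi$ is unbounded, increasing and concave, to obtain an uncountable family $(X_{\psi})_{\psi\in\Lambda}$ of pairwise bilipschitz non-equivalent separated nets in $\R^{d}$, each of which is $\phi$-displacement equivalent to $\Z^{d}$. Using that $\phi$-displacement equivalence is an equivalence relation (Proposition~\ref{prop:equivrel}), any two members of this family are $\phi$-displacement equivalent to one another while not being bilipschitz equivalent; hence $\phi$-displacement equivalence does not imply bilipschitz equivalence. (Two nets from the family already suffice for this step; the uncountability is needed only for the finer statements elsewhere in the paper.)

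I do not expect a genuine obstacle: the entire weight of the theorem rests on the two cited results, and what remains is bookkeeping with the definitions. The only points that call for a little care are the reduction of $\phi$-displacement equivalence to BD equivalence for bounded $\phi$ — one must check the asymptotic condition is insensitive to multiplicative constants, which is immediate from monotonicity — and the elementary verification that bounded-displacement bijections between separated nets are bilipschitz, which is standard.
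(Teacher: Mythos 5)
Your proposal is correct and follows exactly the route the paper intends: the paper states Theorem~\ref{thm:phidispBL} as an immediate consequence of Theorem~\ref{thm:BD_equiv_classes} and Theorem~\ref{thm:bil_neq_slow_disp}, and your argument simply spells out that assembly (bounded $\phi$ reduces to BD equivalence, which implies BL equivalence and is strict by Theorem~\ref{thm:BD_equiv_classes}; unbounded $\phi$ fails to imply BL equivalence by Theorem~\ref{thm:bil_neq_slow_disp} together with transitivity from Proposition~\ref{prop:equivrel}). The bookkeeping steps you flag (insensitivity to multiplicative constants, and that bounded-displacement bijections between separated nets are bilipschitz) are indeed routine and consistent with the paper's remarks.
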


Finally, we finish this article in Section~\ref{sec:omeg_reg} with a useful application of the $\phi$-displacement equivalence spectrum. Whilst \cite{dymond_kaluza2019highly} verifies the existence of separated nets which are $\omega$-irregular for $\omega$ of the form \eqref{eq:our_omega}, it leaves one important issue unresolved: namely, whether $\omega$-regularity for $\omega$ of the form \eqref{eq:our_omega} is distinct from the notion of bilipschitz equivalence (that is, $\omega$-regularity for $\omega(t)=t$). In view of the results \cite[Theorem~5.1]{McM} and \cite[Theorem~1.1]{BK1}, it is clear that there are H\"older moduli of continuity of the form $\omega_{1}(t)=t^{\beta}$ for some $\beta\in (0,1)$ so that for $\omega_{2}(t)=t$, the notions of $\omega_{1}$- and $\omega_{2}$-regularity are distinct; $\omega_{1}$-regularity is strictly weaker than $\omega_{2}$-regularity. However, the most that can be established on the basis of the existing literature is that there are at least two distinct notions of $\omega$-regularity. In particular, \cite{dymond_kaluza2019highly} does not address the issue of whether there are any moduli of continuity $\omega$ strictly in between the H\"older moduli of continuity and the Lipschitz modulus of continuity, such as $\omega$ of the form \eqref{eq:our_omega}, which define further distinct notions of $\omega$-regularity. This is quite unsatisfactory because it leaves open the possibility that the result \cite[Theorem~1.2]{dymond_kaluza2019highly} is in fact identical to \cite[Theorem~1.1]{BK1} and the corresponding result in \cite{McM}, although it is formally stronger.

In the present article we verify that for the modulus of continuity $\omega$ of the form \eqref{eq:our_omega}, the notion of $\omega$-regularity is strictly weaker than BL equivalence. This confirms that the `highly irregular'\footnote{As asserted by the title of the work \cite{dymond_kaluza2019highly}.} separated nets given in \cite[Theorem~1.2]{dymond_kaluza2019highly} are indeed more irregular in a meaningful way than the BL non-equivalent separated nets of McMullen~\cite{McM} and Burago and Kleiner~\cite[Theorem~1.1]{BK1}. 
\begin{restatable*}{thm}{distinct}\label{thm:distinct}
	Let $d\geq 2$, $\alpha_{0}=\alpha_{0}(d)$ be the quantity of \cite[Theorem~1.2]{dymond_kaluza2019highly} and $\omega$ be a modulus of continuity in the sense of Definition~\ref{def:omeg_reg} such that $\omega(t)=t\left(\log \frac{1}{t}\right)^{\alpha_{0}}$ for $t\in (0,a)$ and some $a>0$. Then the set of $\omega$-regular  separated nets in $\R^{d}$ strictly contains the set of separated nets bilipschitz equivalent to $\Z^d$.
\end{restatable*}
Despite this progress, we are only able to increase the number of known pairwise distinct forms of $\omega$-regularity by one:
\begin{restatable*}{cor}{threedistinct}\label{cor:3distinct}
	For any dimension $d\geq 2$ 
	there exist moduli of continuity $\omega_{1},\omega_{2},\omega_{3}$ in the sense of Definition~\ref{def:omeg_reg} so that whenever $i,j\in\set{1,2,3}$ with $i<j$ the set of $\omega_{j}$-regular separated nets of $\R^{d}$ is strictly contained in the set of $\omega_{i}$-regular separated nets.
\end{restatable*}
It therefore remains an interesting research objective to expose the hierarchy of notions of $\omega$-regularity. The authors would conjecture that, at least for moduli of continuity $\omega$ lying asymptotically in between the Lipschitz modulus of continuity and the modulus of continuity of \eqref{eq:our_omega}, we get a fine hierarchy of notions of $\omega$-regularity. More precisely, we conjecture that whenever two moduli of continuity $\omega_{1}$ and $\omega_{2}$ satisfy $\omega_{2}\in o(\omega_{1}(t))$, $\omega_{2}(t)\in \Omega(t)$ and $\omega_{1}(t)\in O\left(t\log\left(\frac{1}{t}\right)^{\alpha_{0}}\right)$ for $t\to 0$, then the notion of $\omega_{1}$-regularity of separated nets in $\R^{d}$ is strictly weaker than that of $\omega_{2}$-regularity.

%%%%%%%%%%%%%%%%%%%%%%%%%%%%%%%%%%%
%%%%%%% 	Preliminaries	%%%%%%%
%%%%%%%%%%%%%%%%%%%%%%%%%%%%%%%%%%%
	\section{Preliminaries and Notation.}\label{sec:prelim}
\paragraph{Functions and Asymptotics.} Throughout the work we use the standard \emph{asymptotic notation} $O, o, \Omega,\Theta,$ with the following meaning. Let $f,g$ be two positive real-valued functions defined on an unbounded domain in $(0,\infty)$. For example, this allows for $f$ and $g$ to be real sequences. Then we write
\begin{align*}
f(x)\in O(g(x)) &\Longleftrightarrow \limsup_{x\to\infty}\frac{f(x)}{g(x)}<\infty,\\
f(x)\in o(g(x)) &\Longleftrightarrow \limsup_{x\to\infty}\frac{f(x)}{g(x)}=0,\\
f(x)\in \Omega(g(x)) &\Longleftrightarrow g(x)\in O(f(x)),\\
f(x)\in \Theta(g(x)) &\Longleftrightarrow f(x)\in O(g(x)) \text{ and } f(x)\in \Omega(g(x)).
\end{align*}
We sometimes write equations or inequalities using the above asymptotic notation. For example, the inequalities $c_{n}\leq n^{2}+O(n)\leq O(n^{2})$ should be interpreted as follows: there exist sequences $a_{n}\in O(n)$ and $b_{n}\in O(n^{2})$ such that $c_{n}\leq n^{2}+a_{n}\leq b_{n}$. Although the symbol $\omega$ also belongs to the standard asymptotic notation, we will avoid using it in this context. The reason for this is that we use the letter $\omega$ to denote moduli of continuity and for the notions of $\omega$-regularity of Definition~\ref{def:omeg_reg}. Since any asymptotic statement using the asymptotic $\omega$ notation can be rephrased using the little $o$ notation, this is not a problem.

A function $f\colon A\subseteq \R\to\R$ will be called \emph{increasing} if $f(t)\geq f(s)$ whenever $s,t\in A$ and $t\geq s$. If both inequalities $\geq$ in this condition may be replaced by the strict inequality $>$, then we call $f$ \emph{strictly increasing}. The notions of \emph{decreasing} and \emph{strictly decreasing} are defined analogously. 

We will require the following basic fact relating to concave majorants:
\begin{lemma}\label{lemma:concave_maj}
	Let $\psi\colon (0,\infty)\to (0,\infty)$ be an increasing function, $\phi\colon (0,\infty)\to (0,\infty)$ be a concave increasing function and suppose that $\psi(R)\in o(\phi(R))$. Then there exists a concave increasing function $\Psi\colon (0,\infty)\to (0,\infty)$ with the following properties:
	\begin{enumerate}[(a)]
		\item\label{maj1} $\psi(t)\leq \Psi(t)$ for all $t\in (0,\infty)$.
		\item\label{maj2} $\psi(R)\notin o(\Psi(R))$.
		\item\label{maj3} $\Psi(R)\in o(\phi(R))$.
	\end{enumerate}
\end{lemma}
\begin{proof}
	Consider the family $\mc{M}$ of all concave functions $\zeta\colon (0,\infty)\to (0,\infty)$ such that $\psi(t)\leq \zeta(t)$ for all $t\in (0,\infty)$. We define $\Psi\colon (0,\infty)\to(0,\infty)$ by
	\[
		\Psi(t)=\inf\set{\zeta(t)\colon \zeta\in \mc{M}}.
	\]
	As the pointwise infimum of a family of concave functions, $\Psi$ is itself concave. Moreover, the definitions of $\mc{M}$ and $\Psi$ ensure that \eqref{maj1} is satisfied. The concavity of $\Psi$, \eqref{maj1} and the fact that $\psi$ is increasing then imply that $\Psi$ is also increasing. To verify \eqref{maj2}, note first that boundedness of $\psi$ implies boundedness of $\Psi$. We may therefore assume that $\psi$ is unbounded. Let $\theta\in (0,1)$, $n\in\N$ and observe that the concave function $t\mapsto \theta \Psi(t)+\psi(n)$ does not belong to $\mc{M}$. We deduce from this the existence of $R_{n}\geq n$ such that $\theta\Psi(R_{n})\leq \psi(R_{n})-\psi(n)\leq \psi(R_{n})$. The sequence $(R_{n})_{n\in\N}$ obtained in this manner witnesses \eqref{maj2}. Finally, we prove \eqref{maj3}. Given $\varepsilon>0$, choose $T>0$ large enough so that $\frac{\psi(t)}{\phi(t)}\leq \varepsilon$ for all $t\geq T$. Then the function $t\mapsto \psi(T)+\varepsilon\phi(t)$ belongs to $\mc{M}$ and so
	\[
		\frac{\Psi(t)}{\phi(t)}\leq \frac{\psi(T)+\varepsilon\phi(t)}{\phi(t)}\leq 2\varepsilon
	\]
	for all $t\geq T$.
\end{proof}

\paragraph{Metric notions.} In a metric space $(M,\dist_{M})$, a set $Z\subseteq M$ will be called \emph{separated} if 
\[
\inf\set{\dist_{M}(z,z')\colon z,z'\in Z, z\neq z'}>0,
\]
and this infimum will be referred to as the \emph{separation constant} (or just the \emph{separation}) of $Z$ (in $M$). Moreover, $Z$ will be called \emph{$\delta$-separated} if its separation constant is at least $\delta$. We will refer to the set $Z$ as a \emph{net} of $M$ if 
\[
\sup\set{\inf_{z\in Z}\dist_{M}(z,x)\colon x\in M}<\infty,
\]
and this supremum will be called the \emph{net constant} of $Z$ in $M$. We will call $Z$ a \emph{$\theta$-net} of $M$ if its net constant is at most $\theta$.

Thus, $Z$ will be called a \emph{separated net} of (or in) $M$ if $Z$ is both separated and a net of $M$. Throughout the work, we will only be concerned with separated nets of subsets of a Euclidean space $\R^{d}$. For a set $F\subseteq \R^{d}$ the separated nets of $F$ are defined according to the above discussion, where the relevant metric space $M$ is given by the set $F$ together with the metric on $F$ induced by the Euclidean distance in $\R^{d}$.

Given two sets $S,T\subseteq \R^{d}$ we let 
\[
\dist(S,T):=\inf\set{\enorm{t-s}\colon s\in S,\,t\in T}.
\]
In the case that $S=\set{s}$ is a singleton we just write $\dist(s,T)$ instead of $\dist(\set{s},T)$. We write $B(x,r)$ and $\cl{B}(x,r)$ respectively for the open and closed balls with centre $x\in\R^{d}$ and radius $r\geq 0$. Moreover, we use the same notation for neighbourhoods of sets, i.e, $B(A,r):=\bigcup_{x\in A}B(x,r)$, where $A\subseteq\R^d$, and similarly for $\cl{B}(A,r)$.

\paragraph{Set related notions.} The cardinality of a set $A$ will be denoted by $\abs{A}$. For $m\in\N$ we let $[m]:=\set{1,2,\ldots,m}$. We also write $\R^{+}$ for the set of positive real numbers. 

\paragraph{Measures.} The symbol $\leb$ will be used to denote the Lebesgue measure on the given Euclidean space $\R^{d}$. Given a measurable function $\rho \colon Q\subseteq \R^{d}\to (0,\infty)$ we let $\rho\leb$ denote the measure on $Q$ defined by
\[
\rho\leb(E)=\niceint{E}{\rho}{\leb},\qquad E\subseteq Q.
\]
Moreover, if $f\colon Q\to\R^{d}$ is a mapping and $\mu$ is a measure on $Q$, we write  $f_{\sharp}\mu$ for the pushforward measure on $f(Q)$
\[
f_{\sharp}\mu(G):=\mu(f^{-1}(G)),\qquad G\subseteq f(Q).
\]

\paragraph{The displacement class of two separated nets.} We also introduce some notation to conveniently capture the $\phi$-displacement equivalences of two separated nets. 
\begin{define}\label{def:disp_cls}
	Let $X,Y\subseteq \R^{d}$ be separated nets. By $\disp_{R}(X,Y)$, we denote the class of increasing, concave functions $\phi\colon (0,\infty)\to (0,\infty)$ for which $X$ and $Y$ are $\phi$-displacement equivalent, according to Definition~\ref{def:disp_eq}.
\end{define}
\subsection*{Key properties of $\phi$-displacement equivalence.}
The next proposition records some sufficient conditions for deriving information on the growth of $\disp_{R}(f^{-1})$ from that of $\disp_{R}(f)$. 
\begin{prop}\label{prop:BD_of_func_and_its_inverse}
Let $X,Y$ be two separated nets in $\R^d$, $\phi\colon (0,\infty)\to (0,\infty)$ be an increasing concave function satisfying $\phi(R)\in o(R)$ 
and let $f\colon X \to Y$ be an injection with $\disp_{R}(f)\leq \phi(R)$ for every $R>0$. Then $\disp_{R}(f^{-1})\in O(\phi(R))$.
\end{prop}
\begin{proof}	
	The assumption $\disp_{R}(f)\leq \phi(R)$ implies that $\enorm{f(x)}\geq\enorm{x}-\phi(\enorm{x})$ for every $x\in X$ and by $\phi(R)\in o(R)$ there is $R_0>0$ such that for every $x\in X$ with $\enorm{x}\geq R_0$ it holds that $\enorm{x}-\phi(\enorm{x})\geq\enorm{x}/2$. Hence, using the concavity and the monotonicity of $\phi$, we can deduce that
	\[
	\enorm{x-f(x)}\leq\phi(\enorm{x})\leq 2\cdot \phi\br*{\frac{\enorm{x}}{2}}\leq 2\cdot\phi(\enorm{f(x)}),
	\]
	for every $x\in X$ with $\enorm{x}\geq R_{0}$, which proves that $\disp_R(f^{-1})\in O(\phi(R))$.\qedhere
\end{proof}
\begin{cor}\label{cor:sublinear_disp}
	Let $X,Y$ be two separated nets in $\R^{d}$ and $f\colon X\to Y$ be an injection with $\disp_{R}(f)\in o(R)$. Then $\disp_{R}(f^{-1})\in o(R)$.
\end{cor}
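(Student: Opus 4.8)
The plan is to argue directly, essentially re-running the proof of Proposition~\ref{obs:BD_of_func_and_its_inverse} but exploiting the monotonicity of $R\mapsto\disp_R(f)$ in place of the growth condition~\eqref{eq:half_growth_phi}. Write $\psi(R):=\disp_R(f)$, which is an increasing function with $\psi(R)\in o(R)$ by hypothesis. In Proposition~\ref{obs:BD_of_func_and_its_inverse}, condition~\eqref{eq:half_growth_phi} was used to transfer a displacement bound from the scale $\enorm{x}$ down to the scale $\enorm{f(x)}$; here we transfer it in the opposite direction, using only that $\psi$ is increasing, at the cost of a harmless constant factor inside $\psi$, which is absorbed because the class $o(R)$ is stable under the rescaling $R\mapsto 2R$. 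Note that one cannot simply apply Proposition~\ref{obs:BD_of_func_and_its_inverse} with $\phi=\psi$, since $\psi=\disp_R(f)$ need not satisfy~\eqref{eq:half_growth_phi} — it may, for instance, vanish near $0$ — which is why a separate argument is needed.

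First I would fix $R_0>0$ with $\psi(R)\le R/2$ for all $R\ge R_0$; this is possible since $\psi\in o(R)$. For any $x\in X$ with $\enorm{x}>0$ we have $\enorm{f(x)-x}\le\psi(\enorm{x})$ by Definition~\ref{def:dispR}, hence $\enorm{f(x)}\ge\enorm{x}-\psi(\enorm{x})$, and when $\enorm{x}\ge R_0$ this yields $\enorm{f(x)}\ge\enorm{x}/2$, i.e.\ $\enorm{x}\le 2\enorm{f(x)}$. Next, given $y\in f(X)$, set $x:=f^{-1}(y)$. If $\enorm{x}\ge R_0$, then, using that $\psi$ is increasing and $\enorm{x}\le 2\enorm{y}$,
\[
\enorm{f^{-1}(y)-y}=\enorm{x-f(x)}\le\psi(\enorm{x})\le\psi(2\enorm{y}).
\]
If instead $\enorm{x}<R_0$, then $\enorm{f^{-1}(y)-y}\le C_1$, where $C_1:=\sup\set{\enorm{x'-f(x')}\colon x'\in X,\ \enorm{x'}<R_0}$ is finite, because $X$ is separated and hence contains only finitely many points of $\cl{B}(\mb{0},R_0)$. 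Combining the two cases, for every $R>0$ and every $y\in f(X)$ with $\enorm{y}\le R$ we get $\enorm{f^{-1}(y)-y}\le C_1+\psi(2R)$, whence
\[
\disp_R(f^{-1})\le C_1+\psi(2R)=C_1+\disp_{2R}(f)\qquad\text{for all }R>0.
\]

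Finally I would divide by $R$ and let $R\to\infty$: the term $C_1/R$ tends to $0$, and $\psi(2R)/R=2\cdot\psi(2R)/(2R)\to 0$ since $\psi\in o(R)$ and $2R\to\infty$. Hence $\disp_R(f^{-1})\in o(R)$, completing the proof. The only slightly delicate point is the treatment of the points of $X$ near the origin, for which the inequality $\enorm{f(x)-x}\le\psi(\enorm{x})$ carries no useful information; but these contribute merely the additive constant $C_1$ to the bound on $\disp_R(f^{-1})$, which is negligible for an $o(R)$ estimate, so I do not anticipate any real obstacle. (An alternative route would be to first majorise $\psi$ by a concave increasing function that is still $o(R)$ and then invoke Proposition~\ref{obs:BD_of_func_and_its_inverse} directly; but — unlike in the setting of Remark~\ref{rem:concave} — one would then have to verify that such a majorant remains $o(R)$, which requires essentially the same work, so the direct argument above is to be preferred.)
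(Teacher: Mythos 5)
Your argument is correct, but it follows a different route from the paper. The paper's proof is a two-line reduction: take a concave majorant $\phi$ of $R\mapsto\disp_R(f)$ coinciding with it infinitely often (as in Remark~\ref{rem:concave}); concavity forces $\phi(R)/R$ to be non-increasing, so coincidence along an unbounded sequence with an $o(R)$ function already gives $\phi(R)\in o(R)$, and $\phi$ automatically satisfies \eqref{eq:half_growth_phi}, so Proposition~\ref{obs:BD_of_func_and_its_inverse} applies and yields $\disp_R(f^{-1})\in O(\phi(R))\subseteq o(R)$. You instead bypass the majorant and re-run the core estimate directly: the same key inequality $\enorm{f(x)}\geq\enorm{x}-\disp_{\enorm{x}}(f)\geq\enorm{x}/2$ for large $\enorm{x}$, but with the half-growth condition replaced by monotonicity of $R\mapsto\disp_R(f)$ evaluated at the doubled radius, giving the clean quantitative bound $\disp_R(f^{-1})\leq C_1+\disp_{2R}(f)$, with the finitely many points of the separated net $X$ near the origin absorbed into the constant $C_1$. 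Your observation that one cannot apply Proposition~\ref{obs:BD_of_func_and_its_inverse} with $\phi(R)=\disp_R(f)$ itself is accurate (that function may vanish near $0$ and can violate \eqref{eq:half_growth_phi} even for large $R$), and your treatment of the small-$\enorm{x}$ case is a point the paper's Proposition handles only implicitly. What each approach buys: the paper's is shorter given the machinery already in place, while yours is self-contained and gives an explicit inequality rather than only an asymptotic class. Your closing remark slightly overstates the cost of the paper's route, though: verifying that the concave majorant stays in $o(R)$ is immediate from the monotonicity of $\phi(R)/R$ for positive concave $\phi$, not "essentially the same work" as the direct estimate; this is a matter of presentation, not correctness.
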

\begin{proof}
Let $\Psi\colon (0,\infty)\to (0,\infty)$ be a concave majorant of the function $R\mapsto \disp_{R}(f)$ with $\Psi(R)\in o(R)$ provided by Lemma~\ref{lemma:concave_maj}. We may now apply Proposition~\ref{prop:BD_of_func_and_its_inverse} to $\phi=\Psi$ and $f$ to verify the corollary.
\end{proof}
 The next example shows that if the assumption $\phi(R)\in o(R)$ in Proposition~\ref{prop:BD_of_func_and_its_inverse} is weakened to $\phi(R)\in O(R)$, then the proposition fails.
 It also shows, in contrast to Corollary~\ref{cor:sublinear_disp}, that no conclusion on the asymptotic class of $\disp_{R}(f^{-1})$ may be derived from the condition $\disp_{R}(f)\in O(R)$.
\begin{example}
	Let $\zeta\colon (0,\infty)\to (0,\infty)$ be an increasing function. Then there exist separated nets $X,Y\subseteq \R$ and a bijection $f\colon X\to Y$ such that $\disp_{R}(f)\in O(R)$ and $\disp_{R}(f^{-1})\notin O(\zeta(R))$. 
\end{example}
\begin{proof}
Let $X':=2\Z$ and $Y':=\Z$. Let $\psi\colon\N\to\frac{1}{2}+\N$ be any strictly increasing function and define $S_k:=\set{\psi(n)\colon n\in\N, n\geq k}$ for $k\in\N$. Finally, we set $X:=X'\cup S_2$ and $Y:=Y'\cup S_1$. Obviously, $X,Y$ are separated nets in $\R$. Now we can define a bijection $f\colon X\to Y$ as follows:
	\[
	f(x):=
	\begin{cases}
	\begin{array}{ll}
	\frac{1}{2}x &\text{ if }x\in X',\\
	\psi(n-1) \qquad\qquad &\text{ if }x=\psi(n).
	\end{array}	
	\end{cases}
	\]
	Clearly, $\disp_R(f)\in O(R)$, but $\disp_{\psi(n-1)}\br*{f^{-1}}\geq\psi(n)-\psi(n-1)$. It remains to restrict the choice of $\psi$ so that $\psi(n)-\psi(n-1)\geq n\zeta(\psi(n-1))$ for all $n\geq 2$.  
\end{proof}
To finish Section~\ref{sec:prelim}, we prove two results announced in the introduction; their statements are repeated here for the reader's convenience.
\linear
\begin{proof}
	We will assume that $\mb{0}\notin X,Y$; this can be ensured by an arbitrarily small shift. Then we observe that the condition $\disp_R(h)\in O(R)$ for a mapping $h\colon Z\to \R^{d}$ defined on a separated set $Z\subseteq \R^{d}\setminus\set{\mb{0}}$ is equivalent to the condition that there is $C>0$ such that
	\begin{equation}\label{eq:disp_lin_in_norm}
	\enorm{x-h(x)}\leq C\enorm{x}\qquad\forall x\in Z.
	\end{equation}
	Next we observe that the claim holds for $X$ and $Y$ if and only if there are $r_1, r_2>0$ such that it holds for $r_1X$ and $r_2Y$; assume that $g\colon r_1X\to r_2Y$ is a bijection and $C>0$ satisfies \eqref{eq:disp_lin_in_norm} for $g$. Then $f\colon X\to Y$ defined as $f(x):=\frac{1}{r_2}g(r_1x)$ is also a bijection and satisfies
	\begin{align*}
	\enorm{f(x)-x}&=\frac{1}{r_2}\enorm{g(r_1x)-r_2x}\leq\frac{\enorm{g(r_1x)-r_1x}+\enorm{r_1x-r_2x}}{r_2}\\
	&\leq\frac{Cr_1\enorm{x}+\abs{r_1-r_2}\enorm{x}}{r_2}=\br*{\frac{Cr_1+\abs{r_1-r_2}}{r_2}}\enorm{x}
	\end{align*}
	for every $x\in X$.
	
	Moreover, note that it is enough to prove that for every $X,Y$ there is always an \emph{injection} $f\colon X\to Y$ satisfying $\disp_R(f),\disp_R(f^{-1})\in O(R)$ instead of a \emph{bijection}---the result then follows by Rado's version of Hall's marriage theorem \cite{Rado-Hall_marriage} from infinite graph theory. Given two injections $f_X\colon X\to Y$ and $f_Y\colon Y\to X$ we can define a binary relation $E\subseteq X\times Y$ so that $\set{x,y}\in E$ if and only if $f_X(x)=y$ or $f_Y(y)=x$. Thus, $E$ is the union of the graphs of $f_X$ and $f_Y^{-1}$. By Rado's theorem there is a bijection $f\colon X\to Y$ such that $(\set{x,f(x)})_{x\in X}\subseteq E$ and then the condition $\disp_R(h)\in O(R)$ for every $h\in\set{f_X, f_X^{-1}, f_Y, f_Y^{-1}}$ ensures that $\disp_{R}(f),\disp_{R}(f^{-1})\in O(R)$.

	Now let $s>0$ stand for the separation of $X$ and $b>0$ for the net constant of $Y$.
	We choose $r>0$ such that $2rb<s$.
	For every $x\in X$ we find $g(x)\in rY$ such that $\enorm{x-g(x)}\leq rb$. As $2rb<s$, if $g(x)=g(x')$, then $x=x'$ for any $x,x'\in X$. Thus, $g$ is injective and the three observations above finish the proof.
\end{proof}

\equivrel
\begin{proof}
	Reflexivity is obvious. The symmetry of $\phi$-displacement equivalence follows from Proposition~\ref{prop:BD_of_func_and_its_inverse} if $\phi(R)\in o(R)$ and from Proposition~\ref{p:disp_always_linear} otherwise. To verify the transitivity, consider separated nets $X,Y,Z$ of $\R^{d}$ for which $X$ and $Y$ are $\phi$-displacement equivalent and $Y$ and $Z$ are $\phi$-displacement equivalent. Let the bijections $f\colon X\to Y$ and $g\colon Y\to Z$ witness this. Then $g\circ f$ is a bijection $X\to Z$ and there is a constant $K>0$ such that $\disp_{R}(f),\disp_{R}(g)\leq K\phi(R)$ for all $R>1$. 
	Let $R>1$ and $x\in X\cap \overline{B}(\mb{0},R)$. Then,
	\begin{multline*}
	\enorm{g\circ f(x)-x}\leq\enorm{g(f(x))-f(x)}+\enorm{f(x)-x}\\
	\leq \disp_{R+K\phi(R)}(g)+\disp_{R}(f)\leq 2K\phi(R+K\phi(R))\leq K'\phi(R),
	\end{multline*}

	for some constant $K'>0$ independent of $R$ and $x$. 
	The existence of $K'$ satisfying the last inequality is due to the conditions on $\phi$.
\end{proof}
	
%%%%%%%%%%%%%%%%%%%%%%%%%%%%%%%%%%%
%%%%%%% 	Negative res	%%%%%%%
%%%%%%%%%%%%%%%%%%%%%%%%%%%%%%%%%%%
	\section{Negative Results}\label{sec:neg}
The present section deals with obstructions to the existence of a bijection $f\colon X\to Y$ between two separated nets $X,Y$ in $\R^{d}$ with $\disp_{R}(f)\in o(R)$. The first lemma establishes that, in the case that $Y=\Z^{d}$ and such a bijection $f\colon X\to \Z^{d}$ exists, the separated net $X$ is forced to have quite a special property. In particular it is easy to come up with examples of $X$ not having the property described in the next lemma and thus not admitting any bijection $f\colon X\to\Z^{d}$ with $\disp_{R}(f)\in o(R)$.
\begin{lemma}\label{lem:disp_sublin_necessary}
	Let $X$ be a separated net in $\R^d$ and let $f\colon X\to \Z^d$ be a bijection such that $\disp_R(f)\in o(R)$.
	For any $r>0$ let 
	\[
	\mu_r(S):=\frac{1}{r^d}\abs{rS\cap X}, \qquad S\subseteq \cl{B}(\mb{0},1),
	\]
	stand for a normalised counting measure supported on the set $\frac{1}{r}X\cap \cl{B}(\mb{0},1)$ and let $(R_n)_{n\in\N}\subset\R^+$ be a sequence converging to infinity.
	Then the sequence $\br*{\mu_{R_n}}_{n\in\N}$ converges weakly to $\rest{\leb}{\cl{B}(\mb{0},1)}$.

\end{lemma}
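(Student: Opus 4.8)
\emph{Proof plan.} The guiding idea is that $f$ transports $X$ bijectively onto $\Z^{d}$ while displacing every point of norm at most $R$ by only $o(R)$; by Corollary~\ref{cor:sublinear_disp} the same holds for $f^{-1}$. After rescaling by $1/R$ this makes $f$ into an \emph{almost-identity} bijection between $\tfrac1R X\cap\cl B(\mb 0,1)$ and $\tfrac1R\Z^{d}\cap\cl B(\mb 0,1)$, up to a boundary layer of width $o(1)$; since $\tfrac1R\Z^{d}$ equidistributes to $\rest{\leb}{\cl B(\mb 0,1)}$, so must $X$. I would realise this through a counting sandwich rather than an honest transport argument. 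Write $\varepsilon(R):=\disp_{R}(f)$ and $\delta(R):=\disp_{R}(f^{-1})$. Both are finite for each $R$ (since $X$ is separated and $f$ bijective, the sets $X\cap\cl B(\mb 0,R)$ and $\Z^{d}\cap\cl B(\mb 0,R)$ are finite), both are non-decreasing in $R$, and both lie in $o(R)$ --- for $\varepsilon$ by hypothesis, for $\delta$ by Corollary~\ref{cor:sublinear_disp}.

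\emph{The sandwich.} The crux is the following claim: for all $y\in\R^{d}$ and $\rho>0$ with $\enorm{y}+\rho\le 1$, and all $R>0$,
\[
\Z^{d}\cap B\!\left(Ry,\,R\rho-\delta(R)\right)\ \subseteq\ f\!\left(X\cap B(Ry,R\rho)\right)\ \subseteq\ \Z^{d}\cap B\!\left(Ry,\,R\rho+\varepsilon(R)\right),
\]
and likewise with $\cl B(\mb 0,R)$ in place of $B(Ry,R\rho)$ throughout (the special case $y=\mb 0$, $\rho=1$, using closed balls). For the right inclusion: if $x\in X\cap B(Ry,R\rho)$ then $\enorm{x}<R\enorm{y}+R\rho\le R$, hence $\enorm{f(x)-x}\le\varepsilon(R)$ and so $\enorm{f(x)-Ry}<R\rho+\varepsilon(R)$. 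For the left inclusion: if $z\in\Z^{d}$ with $\enorm{z-Ry}<R\rho-\delta(R)$ then $\enorm{z}<R$, hence $\enorm{f^{-1}(z)-z}\le\disp_{\enorm{z}}(f^{-1})\le\delta(R)$ and so $\enorm{f^{-1}(z)-Ry}<R\rho$, i.e.\ $f^{-1}(z)\in X\cap B(Ry,R\rho)$. The hypothesis $\enorm{y}+\rho\le 1$ is invoked precisely to keep every point entering the argument inside $\cl B(\mb 0,R)$, the range where $\varepsilon,\delta$ are $o(R)$. The identical sandwich holds with axis-parallel half-open boxes $Q$ satisfying $\cl Q\subseteq\cl B(\mb 0,1)$ in place of the balls $B(y,\rho)$.

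\emph{Equidistribution.} Associating to each $z\in\Z^{d}$ the unit cube centred at $z$ gives, elementarily and uniformly in the centre, $\abs{\Z^{d}\cap B(c,\tau)}=v_{d}\tau^{d}+O(\tau^{d-1})$ with $v_{d}:=\leb(\cl B(\mb 0,1))$, and similarly $\abs{\Z^{d}\cap RQ}=R^{d}\leb(Q)+O(R^{d-1})$ for a fixed box $Q$. Inserting these into the sandwich, dividing by $R_{n}^{d}$, and using $\varepsilon(R_{n})/R_{n}\to 0$ and $\delta(R_{n})/R_{n}\to 0$, one obtains
\[
\mu_{R_{n}}\!\left(B(y,\rho)\right)\longrightarrow v_{d}\rho^{d}=\leb\!\left(B(y,\rho)\right),\qquad \mu_{R_{n}}\!\left(\cl B(\mb 0,1)\right)\longrightarrow v_{d},
\]
as well as $\mu_{R_{n}}(Q)\to\leb(Q)$ for every box $Q$ with $\cl Q\subseteq\cl B(\mb 0,1)$.

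\emph{Upgrading to weak convergence.} The measures $\mu_{R_{n}}$ are supported on the compact set $\cl B(\mb 0,1)$ and their total masses converge to $v_{d}$, so the sequence is precompact in the weak topology; it therefore suffices to show that every weak limit point $\mu$ of $(\mu_{R_{n}})$ equals $\rest{\leb}{\cl B(\mb 0,1)}$. Any such $\mu$ is supported on $\cl B(\mb 0,1)$ with total mass $v_{d}$. Feeding the convergences above into the portmanteau theorem (using $\limsup_{n}\mu_{R_{n}}(F)\le\mu(F)$ for closed $F$ and $\liminf_{n}\mu_{R_{n}}(U)\ge\mu(U)$ for open $U$, and sandwiching a given box between slightly smaller and slightly larger ones), one gets $\mu(Q)=\leb(Q)$ for every half-open box $Q$ with $\cl Q\subseteq B(\mb 0,1)$. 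Such boxes form a $\pi$-system generating the Borel $\sigma$-algebra of $B(\mb 0,1)$, so by Dynkin's $\pi$--$\lambda$ theorem $\mu$ and $\leb$ agree on all Borel subsets of $B(\mb 0,1)$; since moreover $\mu(\cl B(\mb 0,1))=v_{d}=\leb(B(\mb 0,1))$, the sphere $\partial B(\mb 0,1)$ must be $\mu$-null, and hence $\mu=\rest{\leb}{\cl B(\mb 0,1)}$. I expect the only genuinely delicate point to be the bookkeeping in the sandwich of the second paragraph --- making sure, via $\enorm{y}+\rho\le 1$, that every point counted lies in the ball of radius $R$ where the displacement estimates are $o(R)$; the lattice-point counting and the portmanteau/$\pi$--$\lambda$ endgame are routine.
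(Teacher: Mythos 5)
Your proof is correct, but it follows a genuinely different route from the paper's. The paper rescales, setting $f_n(x):=\tfrac{1}{R_n}f(R_nx)$ on $\tfrac{1}{R_n}X\cap\cl{B}(\mb{0},1)$, shows these maps are $o(R_n)$-Lipschitz and, after a Kirszbraun extension, converge uniformly to the identity; it then proves that the pushforwards $(\cl{f}_n)_\sharp\mu_{R_n}$ converge weakly to $\rest{\leb}{\cl{B}(\mb{0},1)}$ by comparing with the counting measure on $\tfrac{1}{R_n}\Z^d$ (via an inclusion of the same flavour as your sandwich, but only for balls centred at $\mb{0}$ and derived from $\disp_R(f)$ alone), and finally transfers the conclusion back to $\mu_{R_n}$ by a Prokhorov reduction together with a cited lemma from \cite{DKK2018}. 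You dispense with the extension, the pushforward and the external lemma entirely: your two-sided cardinality sandwich over arbitrary balls and boxes (whose left inclusion uses $\disp_R(f^{-1})\in o(R)$ from Corollary~\ref{cor:sublinear_disp}, which precedes the lemma, so there is no circularity), combined with uniform lattice-point counting, yields $\mu_{R_n}(Q)\to\leb(Q)$ on a measure-determining class, after which the portmanteau and $\pi$--$\lambda$ endgame is routine. What the paper's route buys is reuse of machinery (rescaled maps, Kirszbraun, pushforward comparison) that recurs in its broader programme; what yours buys is elementarity and self-containedness. The points to write out with care are exactly those you flag: finiteness and monotonicity of $\disp_R$, the role of $\enorm{y}+\rho\le 1$ in keeping every point inside $\cl{B}(\mb{0},R)$ where the displacement bounds apply, and equality of the total masses before invoking Dynkin so that the complementation step in the $\lambda$-system is legitimate.
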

\begin{proof}
	We write $\cl{B}:=\cl{B}(\mb{0},1)$.
	Let $s,b>0$ be the separation and the net constants of $X$, respectively. We set $X_n:=\frac{1}{R_n}X\cap \cl{B}$ and observe that each $X_n$ is an $\frac{s}{R_n}$-separated $\frac{2b}{R_n}$-net of $\cl{B}$.
	
	Next we define $f_n\colon X_n\to\R^d$ as $f_n(x):=\frac{1}{R_n}f(R_n x)$.
	Then the assumption $\disp_R(f)\in o(R)$ implies that
	\begin{equation}\label{eq:f_n_disp_vanishes}
	\lnorm{\infty}{f_n-\id}=\frac{1}{R_n}\lnorm{\infty}{f\circ R_n\id-R_n\id}\stackrel{n\to\infty}{\longrightarrow}0.
	\end{equation}
	In other words, $\lnorm{\infty}{f_n-\id}\in o(1)$. We also observe that $f_n$ is $o(R_n)$-Lipschitz: for any $x,y\in X_n$ it holds that
	\begin{align*}
	\enorm{f_n(x)-f_n(y)}&\leq\enorm{f_n(x)-x}+\enorm{f_n(y)-y}+\enorm{x-y}\\
	&\leq 2\lnorm{\infty}{f_n-\id}+\enorm{x-y}.
	\end{align*}
	Applying \eqref{eq:f_n_disp_vanishes}, we get that
	\[
	\frac{\enorm{f_n(x)-f_n(y)}}{\enorm{x-y}}\leq 1+\frac{o(1)}{\enorm{x-y}}.
	\]
	As $X_n$ is $\frac{s}{R_n}$-separated, the right-hand side above belongs to $o(R_n)$.
	
	Therefore, using Kirszbraun's Theorem~\cite{Kirszbraun1934}, each $f_n$ can be extended to an $o(R_n)$-Lipschitz mapping $\cl{f}_n\colon \cl{B}\to\R^d$.
	Now for any $x\in\cl{B}$ we choose $x_n\in X_n$ such that $\enorm{x-x_n}\leq\frac{2b}{R_n}$.
	Considering that $f_n(x_n)=\cl{f}_n(x_n)$ and \eqref{eq:f_n_disp_vanishes} we get that
	\begin{align*}
	\enorm{\cl{f}_n(x)-x}&\leq\enorm{\cl{f}_n(x)-\cl{f}_n(x_n)}+\enorm{f_n(x_n)-x_n}+\enorm{x_n-x}\\
	&\leq o(1)+\frac{2b}{R_n}\stackrel{n\to\infty}{\longrightarrow}0,
	\end{align*}
	where the $o(1)$ expression above is independent of $x$. This shows that $\cl{f}_{n}$ converges uniformly to $\id|_{\cl{B}}$.
	
	As a shortcut, we write $\mu_n:=\mu_{R_n}$. 
	By an application of Prokhorov's theorem, we observe that the sequence $(\mu_{n})$ converges weakly to the Lebesgue measure on $\cl{B}$ if and only if all of its weakly convergent subsequences do. Therefore, it is enough to verify the assertion of the lemma for an arbitrary weakly convergent subsequence of $(\mu_{n})$. We may assume, without loss of generality, that this given weakly convergent subsequence is the original sequence $(\mu_{n})$ and write $\mu$ for its weak limit.
	Using \cite[Lem.~5.6]{DKK2018} we get that $\br*{\cl{f}_n}_\sharp (\mu_n)$ converges weakly to $\br*{\rest{\id}{\cl{B}}}_\sharp (\mu)=\mu$. Consequently, it suffices to prove that $\br*{\cl{f}_n}_\sharp (\mu_n)$ converges weakly to $\rest{\leb}{\cl{B}}$.

	We define
	\[
	\varepsilon(R):=\sup_{R'\geq R}\frac{\disp_{R'}(f)}{R'}.
	\]
	The definition implies that $\varepsilon$ is decreasing and from the assumption $\disp_R(f)\in o(R)$ it follows that $\varepsilon(R_n)$ goes to zero as $n$ goes to infinity.
	For every $x\in X$ it holds that $\enorm{f(x)}\geq \enorm{x}-\varepsilon(\enorm{x})\enorm{x}$. This inequality in combination with the bijectivity of $f\colon X\to \Z^{d}$ and the fact that $\varepsilon$ is decreasing implies 	
	\begin{equation}\label{eq:increas_ball_cover}
	f\br*{X\cap\cl{B}(\mb{0},R)}\supseteq\Z^d\cap \cl{B}(\mb{0},(1-\varepsilon(R))R).
	\end{equation}
	for every $R>0$, where the ball on the right hand side should be interpreted as the empty set if its radius is negative. Indeed, observe that any point in the set on the right hand side has the form $f(x)$ for some $x\in X$ which satisfies $(1-\varepsilon(R))R>\enorm{f(x)}\geq \enorm{x}(1-\varepsilon(\enorm{x}))$ and therefore $\enorm{x}<R$. 
	
	Now we compare $\br*{\cl{f}_n}_\sharp(\mu_n)$ to the standard normalised counting measure $\nu_n$ supported on $\frac{1}{R_n}\Z^d$, i.e.,
	\[
	\nu_n(S):=\frac{1}{R_n^d}\abs{S\cap\frac{1}{R_n}\Z^d}\qquad\text{ for } S\subseteq\R^d.
	\]
	It is clear that $\nu_n\rightharpoonup\leb$. Thus, it suffices to verify that for any continuous function $\varphi\colon\R^d\to\R$ with compact support it holds that
	\[
	\abs{\niceint{\cl{f}_n\br*{\cl{B}}}{\varphi}{\br*{\cl{f}_n}_\sharp(\mu_n)}-\niceint{\cl{B}}{\varphi}{\nu_n}}\stackrel{n\to\infty}{\longrightarrow}0.
	\]
	Since $\br*{\cl{f}_n}_\sharp(\mu_n)$ is supported on $f_n(X_n)\subset\frac{1}{R_n}\Z^d$, we can rewrite the absolute value above as
	\[
	\frac{1}{R_n^d}\abs{\sum_{x\in f_n(X_n)\subset\frac{1}{R_n}\Z^d}\varphi(x)-\sum_{x\in\cl{B}\cap\frac{1}{R_n}\Z^d}\varphi(x)}.
	\]
	This expression can be bounded above by
	\begin{equation}\label{eq:weak_conv_upper_set_diff}
	\frac{1}{R_n^d}\lnorm{\infty}{\varphi}\abs{f_n(X_n)\Delta\br*{\cl{B}\cap\frac{1}{R_n}\Z^d}}.
	\end{equation}
	Further, we argue that \eqref{eq:weak_conv_upper_set_diff} can be bounded above by
	\begin{equation}\label{eq:weak_conv_set_diff_both_halves}
	\lnorm{\infty}{\varphi}\frac{\abs{\frac{1}{R_n}\Z^d\cap\cl{B}(\mb{0},1+\varepsilon(R_n))\setminus\cl{B}(\mb{0},1-\varepsilon(R_n))}}{R_n^d}.
	\end{equation}
	For every $n\in\N$
	\eqref{eq:increas_ball_cover} implies that
	$f_n\br*{X_n}\supseteq\frac{1}{R_n}\Z^d\cap\cl{B}\br*{\mb{0},1-\varepsilon(R_n)}$.
	Therefore,
	\begin{equation}\label{eq:weak_conv_set_diff_one_half}
	\br*{\cl{B}\cap\frac{1}{R_n}\Z^d}\setminus f_n(X_n)\subseteq\frac{1}{R_n}\Z^d\cap\cl{B}\setminus\cl{B}(\mb{0},1-\varepsilon(R_n)).
	\end{equation}
	
	Using the definition of $\varepsilon(R_n)$ we immediately get that for any $x\in X\cap\cl{B}(\mb{0},R_n)$ it holds that $\enorm{f(x)}\leq\enorm{x}+\varepsilon(R_n)R_n\leq(1+\varepsilon(R_n))R_n$.
	Therefore, $f(X\cap\cl{B}(\mb{0},R_n))\subseteq \Z^d\cap\cl{B}\br*{\mb{0},(1+\varepsilon(R_n))R_n}$. Consequently, we deduce that
	\[
	f_n(X_n)\subseteq\frac{1}{R_n}\Z^d\cap \cl{B}(\mb{0},1+\varepsilon(R_n)),
	\]
	which together with \eqref{eq:weak_conv_set_diff_one_half} proves \eqref{eq:weak_conv_set_diff_both_halves}.
	
	By centering an axes-aligned cube of side length $\frac{1}{R_n}$ at each point of the set$\frac{1}{R_n}\Z^d\cap\cl{B}(\mb{0},1+\varepsilon(R_n))\setminus\cl{B}(\mb{0},1-\varepsilon(R_n))$, we see that
	\[
	\abs{\frac{1}{R_n}\Z^d\cap\cl{B}(\mb{0},1+\varepsilon(R_n))\setminus\cl{B}(\mb{0},1-\varepsilon(R_n))}\leq R_n^d\leb\br*{\cl{B}\br*{\partial\cl{B},\varepsilon(R_n)+\frac{\sqrt{d}}{2R_n}}}.
	\]
	The last quantity is easily seen to be of order $R_n^d\cdot O\br*{\varepsilon(R_n)+\frac{1}{R_n}}$.
	This implies that the upper bound of \eqref{eq:weak_conv_set_diff_both_halves}, and thus, also \eqref{eq:weak_conv_upper_set_diff} go to zero as $n$ goes to infinity.
\end{proof}
We now recall the notion of natural density of a separated net. We will see that the natural density of separated nets is invariant under bijections $f$ with $\disp_{R}(f)\in o(R)$.
\begin{define}\label{def:natural_density}
	Let $X$ be a separated net in $\R^d$. Then its \emph{natural density}\footnote{Sometimes the term \emph{asymptotic density} is used instead in the literature.}, denoted by $\alpha(X)$, is defined as
	\[
	\alpha(X):=\lim_{R\to\infty}\frac{\abs{X\cap\cl{B}(\mb{0},R)}}{\leb\br*{\cl{B}(\mb{0},R)}},
	\]
	provided the limit exists; otherwise it is undefined. 
\end{define}
\begin{prop}\label{prop:nat_dens_different}
	Let $X,Y$ be two separated nets in $\R^d$ such that either $\alpha(X)\neq\alpha(Y)$, or exactly one of $\alpha(X), \alpha(Y)$ is not defined. Then there is no bijection $f\colon X\to Y$ with $\disp_R(f)\in o(R)$.
\end{prop}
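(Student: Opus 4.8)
The plan is to deduce from the existence of a bijection $f\colon X\to Y$ with $\disp_R(f)\in o(R)$ that $X$ and $Y$ must have the same upper and lower natural densities. Since the hypothesis of the proposition precludes this in every case (either both densities exist and differ, or exactly one exists), this yields the desired contradiction. So suppose such an $f$ exists. The first step is to invoke Corollary~\ref{cor:sublinear_disp} to get $\disp_R(f^{-1})\in o(R)$ as well, so that both $f$ and $f^{-1}$ have sublinear displacement.

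The key observation is that sublinear displacement makes $f$ almost preserve each ball centred at the origin. Fix $\varepsilon\in(0,1)$. Since $\disp_R(f),\disp_R(f^{-1})\in o(R)$, for all sufficiently large $R$ we have $\disp_R(f)\le\varepsilon R$ and $\disp_{(1-\varepsilon)R}(f^{-1})\le\varepsilon(1-\varepsilon)R$. From the first bound, $\enorm{f(x)}\le\enorm{x}+\disp_R(f)\le(1+\varepsilon)R$ for every $x\in X\cap\cl{B}(\mb{0},R)$, so $f(X\cap\cl{B}(\mb{0},R))\subseteq Y\cap\cl{B}(\mb{0},(1+\varepsilon)R)$. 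From the second bound, any $y\in Y$ with $\enorm{y}\le(1-\varepsilon)R$ satisfies $\enorm{f^{-1}(y)}\le(1-\varepsilon)R+\varepsilon(1-\varepsilon)R=(1-\varepsilon^2)R\le R$, whence $Y\cap\cl{B}(\mb{0},(1-\varepsilon)R)\subseteq f(X\cap\cl{B}(\mb{0},R))$. As $f$ is injective, comparing cardinalities gives
\[
\abs{Y\cap\cl{B}(\mb{0},(1-\varepsilon)R)}\le\abs{X\cap\cl{B}(\mb{0},R)}\le\abs{Y\cap\cl{B}(\mb{0},(1+\varepsilon)R)}
\]
for all large $R$.

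Next I would divide through by $\leb(\cl{B}(\mb{0},R))$ and rewrite each extreme term, for instance $\frac{\abs{Y\cap\cl{B}(\mb{0},(1+\varepsilon)R)}}{\leb(\cl{B}(\mb{0},R))}=(1+\varepsilon)^{d}\cdot\frac{\abs{Y\cap\cl{B}(\mb{0},(1+\varepsilon)R)}}{\leb(\cl{B}(\mb{0},(1+\varepsilon)R))}$, and symmetrically for the lower term. Taking $\limsup_{R\to\infty}$ of the upper inequality and $\liminf_{R\to\infty}$ of the lower one (the densities involved are finite because $X$ and $Y$ are separated), and afterwards letting $\varepsilon\to 0$, yields $\overline{\alpha}(X)\le\overline{\alpha}(Y)$ and $\underline{\alpha}(X)\ge\underline{\alpha}(Y)$, where $\overline{\alpha},\underline{\alpha}$ denote the upper and lower natural densities. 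Running the same argument with $f^{-1}$ in place of $f$ gives the reverse inequalities, so $\overline{\alpha}(X)=\overline{\alpha}(Y)$ and $\underline{\alpha}(X)=\underline{\alpha}(Y)$. In particular $\alpha(X)$ is defined precisely when $\alpha(Y)$ is, and in that case the two coincide — contradicting the assumption that either $\alpha(X)\neq\alpha(Y)$ or exactly one of them is undefined.

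I do not expect a genuine obstacle here: this is a soft consequence of bijectivity together with the fact that an $o(R)$-displacement map nearly preserves balls about the origin. The only two points that require a little care are obtaining the lower inclusion $Y\cap\cl{B}(\mb{0},(1-\varepsilon)R)\subseteq f(X\cap\cl{B}(\mb{0},R))$ correctly — this is exactly where the bound on $\disp_R(f^{-1})$, and hence Corollary~\ref{cor:sublinear_disp}, is used — and taking the two limits in the correct order ($R\to\infty$ first, then $\varepsilon\to 0$) so that the harmless factors $(1\pm\varepsilon)^{d}$ can be discarded.
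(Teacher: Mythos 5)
Your proposal is correct, but it takes a different route from the paper. The paper deduces the proposition from an intermediate result, Lemma~\ref{lem:nat_dens}: the density hypothesis yields a sequence $R_n\to\infty$ along which the ratio $\abs{X\cap\cl{B}(\mb{0},R_n)}/\abs{Y\cap\cl{B}(\mb{0},R_n)}$ converges to some $L\neq 1$; assuming $L>1$ (the reduction to this case is where the paper, like you, invokes Corollary~\ref{cor:sublinear_disp}), one picks $C\in(1,L)$ and uses that $Y$ is a separated net to find $K>1$ with $\abs{Y\cap\cl{B}(\mb{0},KR_n)}<C\abs{Y\cap\cl{B}(\mb{0},R_n)}$ eventually; injectivity then forces some $x_n\in X\cap\cl{B}(\mb{0},R_n)$ with $\enorm{f(x_n)}>KR_n$, i.e.\ a displacement of at least $(K-1)R_n$, contradicting sublinearity. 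So the paper's mechanism is a one-sided pigeonhole along a subsequence producing a single badly displaced point, whereas you prove the stronger invariance statement that an $o(R)$-displacement bijection preserves both the upper and lower natural densities, via the two-sided sandwich $Y\cap\cl{B}(\mb{0},(1-\varepsilon)R)\subseteq f(X\cap\cl{B}(\mb{0},R))\subseteq Y\cap\cl{B}(\mb{0},(1+\varepsilon)R)$. The paper's lemma is a reusable tool needing only a count comparison along one sequence; your argument is self-contained, bypasses the lemma, and handles the ``exactly one density defined'' case transparently through $\overline{\alpha},\underline{\alpha}$. One small economy you missed: the sandwich obtained from $f$ alone already gives all four inequalities (take $\limsup$ and $\liminf$ of both comparisons and let $\varepsilon\to 0$), so the second pass with $f^{-1}$ is redundant, although Corollary~\ref{cor:sublinear_disp} is of course still needed for the lower inclusion.
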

\begin{proof}
	The assumption on $\alpha(X)$ and $\alpha(Y)$ implies that there is an unbounded, increasing sequence $(R_{n})_{n\in\N}$ such that 
		\[
		L:=\lim_{n\to\infty}\frac{\abs{X\cap\cl{B}(\mb{0},R_n)}}{\abs{Y\cap\cl{B}(\mb{0},R_n)}}
		\]
		is defined, but $L\neq 1$.
	We may assume without loss of generality that $L>1$. Otherwise just interchange $X$ and $Y$ and use Corollary~\ref{cor:sublinear_disp}. We choose $C\in (1,L)$ and find $n_0\in\N$ such that for every $n\geq n_0$ it holds that $\abs{X\cap\cl{B}(\mb{0},R_n)}\geq C \abs{Y\cap\cl{B}(\mb{0},R_n)}$. Because $Y$ is a separated net, there is $K>1$ and $n_1\in\N$ such that $\abs{Y\cap\cl{B}(\mb{0},KR_n)}< C\abs{Y\cap\cl{B}(\mb{0},R_n)}$ for every $n\geq n_1$.
	Therefore, for every $n\geq\max\set{n_0,n_1}$ we see that $\abs{X\cap\cl{B}(\mb{0},R_n)}>\abs{Y\cap\cl{B}(\mb{0},K R_n)}$, and thus, there must be $x_n\in X\cap\cl{B}(\mb{0},R_n)$ such that $\enorm{f(x_n)}>K R_n$. Consequently, $\enorm{x_n-f(x_n)}\geq\enorm{f(x_n)}-\enorm{x_n}\geq (K-1)R_n$.
\end{proof}

In view of Proposition~\ref{prop:nat_dens_different} it is natural to ask whether for two separated nets $X,Y\subseteq \R^{d}$ the condition that both natural densities $\alpha(X)$ and $\alpha(Y)$ are well defined and coincide is sufficient for the existence of a bijection $f\colon X\to Y$ with $\disp_{R}(f)\in o(R)$. We finish this section with an example which demonstrates that this is not the case:
\begin{example}
	There is a separated net $X$ in $\R^d$ such that $\alpha(X)=\alpha(\Z^d)$, but there is no bijection $f\colon X\to\Z^d$ with $\disp_R(f)\in o(R)$.
\end{example}
\begin{proof}
	Fix a hyperplane $H$ going through $\mb{0}$. We will denote the closed positive and the open negative half-spaces that it determines by $H^+$ and $H^-$, respectively. Moreover, fix $c\in (1,2)$ and define
	\[
	X:=(c^{-\frac{1}{d}}\Z^d\cap H^+)\cup((2-c)^{-\frac{1}{d}}\Z^d\cap H^-).
	\]
	Then, clearly, $\mu_{R_n}$ defined as in the statement of Lemma~\ref{lem:disp_sublin_necessary} converges weakly to the measure $c\rest{\leb}{\cl{B}\cap H^+}+(2-c)\rest{\leb}{\cl{B}\cap H^-}\neq\rest{\leb}{\cl{B}}$. On the other hand, $\alpha(X)=\alpha(\Z^d)$ by construction.
	Thus, Lemma~\ref{lem:disp_sublin_necessary} finishes the proof.
\end{proof}
	
%%%%%%%%%%%%%%%%%%%%%%%%%%%%%%%%%%%
%%%%%%% 	Prescr. growth	%%%%%%%
%%%%%%%%%%%%%%%%%%%%%%%%%%%%%%%%%%%	
	\section{The spectrum of \texorpdfstring{$\phi$}{\unichar{"03D5}}-displacement equivalence.}\label{sec:disp_class}
In the present section we prove Theorem~\ref{thm:displacement_class}:
\dispcls
Let us begin working towards a proof of Theorem~\ref{thm:displacement_class}. The proof is based on the following construction, which we present in a bit more general form than what is strictly needed for the proof of Theorem~\ref{thm:displacement_class}:
\begin{construction}\label{construction:BD_slow_growth_new}
Let $X$ be a separated net in $\R^d$ and let $(R_i)_{i\in\N}\subset \R^{+}$ be a strictly increasing sequence converging to infinity. Moreover, let $\phi\colon(0,\infty)\to(0,\infty)$ be an unbounded increasing function. The aim is to construct a set $Y$ in $\R^d$ which will, roughly speaking, be a piecewise rescaled version of $X$ and such that $\disp_{R}(Y,X)\subseteq\Omega(\phi(R))$. In the applications, we will choose $\phi$ and $(R_i)_{i\in\N}$ in a way that will ensure that $Y$ is a separated net. However, the construction described here is more general.

Formally, we will construct $Y$ as an image of $X$.
For any $R>0$ we set $\cl{R}:=R+\phi(R)$. We also define $\cl{R}_0:=R_0:=0$.
The desired mapping $g\colon X\to\R^d$ will be radial, so we first define its radial part $\gamma\colon [0,\infty)\to [0,\infty)$. We set $\gamma(\cl{R}_i):=R_i$ and prescribe that in between these specified values the function $\gamma$ interpolates linearly. Thus, $\gamma$ is a piecewise linear function with breaks precisely at the points $\cl{R}_i$.
Finally, we define $g(x):=\frac{\gamma(\enorm{x})}{\enorm{x}}x$ and $Y:=g(X)$.

For later use we introduce a sequence $(c_i)_{i\in\N}$ representing the slopes of $\gamma$. That is, for every $i\in\N$ we require that $\gamma(\cl{R}_i)=\gamma(\cl{R}_{i-1})+c_i(\cl{R}_i-\cl{R}_{i-1})$. This is equivalent to setting $c_i:=\frac{\gamma(\cl{R}_i)-\gamma(\cl{R}_{i-1})}{\cl{R}_i-\cl{R}_{i-1}}=\frac{R_i-R_{i-1}}{\cl{R}_i-\cl{R}_{i-1}}$.

We also record the maximum distance between consecutive `spherical layers' in $X$.
Let $\set{\ell_1<\ell_2<\ldots<\ell_k<\ldots}:=\set{\enorm{x}\colon x\in X}$. Additionally, we put $\ell_0:=0$. Then we define 
$s:=\sup\set{\ell_{k}-\ell_{k-1}\colon k\in\N}$.
Since $X$ is a net, $s$ is finite.
\end{construction}

\begin{prop}\label{prop:constr_gives_sep_net}
Assume, additionally to the assumptions of Construction~\ref{construction:BD_slow_growth_new}, that $\phi(R)\in O(R)$ and that there is $K>1$ such that $R_{i}\geq KR_{i-1}$ for every $i\in\N$.
Then $\gamma$ and $g$ are bilipschitz and $Y$ is a separated net. 
\end{prop}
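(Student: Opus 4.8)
The plan is to show first that the radial profile $\gamma$ is bilipschitz, with constants depending only on $X$, $Z$ and $\phi$, and then to deduce that $g$ is bilipschitz and that $Y=g(X)$ is a separated net. The key quantities to control are the slopes $c_i=\frac{R_i-R_{i-1}}{\cl R_i-\cl R_{i-1}}$ on each interval $[\cl R_{i-1},\cl R_i]$: since $\gamma$ is piecewise affine with these slopes, $\gamma$ is bilipschitz precisely when the $c_i$ are bounded above and below by positive constants independent of $i$. The first task is therefore to estimate $\cl R_i$ in terms of $R_i$. Because $Z$ is a separated net with separation constant $s_Z^{\ast}>0$ and net constant $b_Z$, the counting function $R\mapsto\abs{Z\cap\cl B(\mb0,R)}$ grows like $\Theta(R^d)$, and similarly for $X$; more precisely there are constants $0<a_1\le a_2$ (depending only on the separation and net constants of $X$ and $Z$) such that the defining relation $\abs{X\cap\cl B(\mb0,\cl R_i)}\ge\abs{Z\cap\cl B(\mb0,R_i+\phi(R_i))}$ together with minimality of $\cl R_i$ forces
\[
a_1\,(R_i+\phi(R_i))\le \cl R_i\le a_2\,(R_i+\phi(R_i))+s,
\]
where the additive $s$ accounts for the gap between consecutive spherical layers of $X$ (in the case $Z=X$ one has $\cl R_i=R_i+\phi(R_i)$ exactly and this is immediate). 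Using $\phi(R)\in O(R)$, we get $\cl R_i\in\Theta(R_i)$, say $b_1 R_i\le \cl R_i\le b_2 R_i$ for $i\ge 1$ with $b_1,b_2$ depending only on $X,Z,\phi$ (here we use $R_1>s$ to absorb the additive constant into the multiplicative one).

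Next I would feed in the lacunarity hypothesis $R_i\ge KR_{i-1}$. From $b_1R_i\le\cl R_i\le b_2R_i$ we obtain $\cl R_i-\cl R_{i-1}\ge b_1R_i-b_2R_{i-1}\ge(b_1-b_2/K)R_i$, which is a definite positive multiple of $R_i$ once $K>b_2/b_1$; and $\cl R_i-\cl R_{i-1}\le b_2R_i$ trivially. On the other hand $R_i-R_{i-1}$ lies between $(1-1/K)R_i$ and $R_i$. Dividing, the slope $c_i=\frac{R_i-R_{i-1}}{\cl R_i-\cl R_{i-1}}$ is pinched between two positive constants determined by $b_1,b_2,K$ — hence by $X,Z,\phi$ alone once $K$ is fixed as the threshold $\max\{2,\,2b_2/b_1\}$, say. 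This shows $\gamma$ is $L$-bilipschitz on $[0,\infty)$ for some $L=L(X,Z,\phi)>1$ (continuity and piecewise-affineness handle the gluing at the breakpoints $\cl R_i$).

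Then I would promote this to $g(x)=\frac{\gamma(\enorm x)}{\enorm x}x$. The standard fact here is that a radial map whose profile $\gamma$ is increasing and $L$-bilipschitz with $\gamma(0)=0$ is itself $O(L)$-bilipschitz on $\R^d$: for $x,y$ one splits $g(x)-g(y)$ into the change in radius and the change in direction, using $\abs{\gamma(\enorm x)-\gamma(\enorm y)}\le L\,\abs{\enorm x-\enorm y}$ and $\gamma(t)\le Lt$, $\gamma(t)\ge t/L$; this is a routine computation I would not write out in full. Finally, $Y=g(X)$ is a separated net because a bilipschitz image of a separated net is a separated net: the separation of $Y$ is at least $s_X^{\ast}/L$ where $s_X^{\ast}$ is the separation of $X$, and the net constant of $Y$ is at most $L$ times that of $X$ (for the net property one also uses that $g$ is a bijection of $\R^d$ onto $\R^d$, which follows from $\gamma$ being an increasing bijection of $[0,\infty)$).

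The main obstacle I expect is the first step: turning the implicitly-defined $\cl R_i$ into honest two-sided linear bounds in $R_i$. This needs the uniform density estimates $\abs{X\cap\cl B(\mb0,R)},\abs{Z\cap\cl B(\mb0,R)}\in\Theta(R^d)$ with explicit constants, and some care near small radii — which is exactly what the hypothesis $R_1>s$ is there to neutralise — together with the observation that the ``overshoot'' in the definition of $\cl R_i$ as a minimum is at most one layer gap $s_X\le s$. Everything after that (bilipschitz profile $\Rightarrow$ bilipschitz radial map $\Rightarrow$ separated-net image) is soft and quantitative but routine.
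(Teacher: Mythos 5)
Your proposal is correct and follows essentially the same route as the paper: establish $\cl R_i\in\Theta(R_i)$ (two-sided linear bounds $LR_i\le\cl R_i\le UR_i$) from the $\Theta(R^d)$ counting estimates for $X,Z$ together with $\phi(R)\in O(R)$, then use the lacunarity $R_i\ge KR_{i-1}$ with $K$ exceeding the ratio of these constants to pinch the slopes $c_i$ between positive constants, and conclude via the standard facts that a bilipschitz radial profile gives a bilipschitz radial map and that a bilipschitz image of a separated net is a separated net. Your extra remarks (the additive layer-gap overshoot, surjectivity of $\gamma$ for the net property) are minor refinements of details the paper leaves implicit.
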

\begin{proof}
Assuming that $\gamma$ is bilipschitz, it is easy to see that $g$ is bilipschitz as well, as $g$ is a radial map with radial part $\gamma$; just consider the points in spherical coordinates. Moreover, a bilipschitz image in $\R^{d}$ of a separated net in $\R^{d}$ is a separated net in $\R^{d}$.

The function $\gamma$ is bilipschitz if and only if the sequence $(c_i)_{i\in\N}$ is bounded and bounded away from zero.
As $\phi$ is increasing and $(R_i)_{i\in\N}$ is strictly increasing, we immediately obtain
\[
c_i=\frac{R_i-R_{i-1}}{\cl{R}_{i}-\cl{R}_{i-1}}=\frac{R_i-R_{i-1}}{R_{i}-R_{i-1}+\phi(R_i)-\phi(R_{i-1})}\leq 1.
\]

By the assumption on $\phi$ and the definition $\cl{R}=R+\phi(R)$ there is $C>1$ such that $R\leq\cl{R}\leq CR$ for every $R\geq R_1$.
We note that $(\cl{R}_i)_{i\in\N}$ is increasing. Using the assumption on the growth of $(R_{j})_{j\in\N}$, we obtain
\[
c_i=\frac{R_i-R_{i-1}}{\cl{R}_{i}-\cl{R}_{i-1}}\geq\frac{R_i-R_i/K}{\cl{R}_i}\geq\frac{K-1}{CK}>0.\qedhere
\]
\end{proof}

\begin{lemma}\label{lem:BD_constr_lower_bound}
Let $\phi, (R_i)_{i\in\N}, X,Y$ and $s$ be as in Construction~\ref{construction:BD_slow_growth_new} and let $f\colon Y\to X$ be an injective mapping. If, in addition, there is $K>0$ such that $\phi(R_{i+1})\leq K\phi(R_{i})$ for every $i\in\N$, then $\disp_R(f)\in \Omega(\phi(R))$. 
\end{lemma}
\begin{proof}
By Construction~\ref{construction:BD_slow_growth_new}, for every $i\in\N$ it holds that 
\[
\abs{X\cap \cl{B}(\mb{0},R_i+\phi(R_i))}=\abs{Y\cap\cl{B}\br*{\mb{0}, R_i}}.
\]
This implies that $\disp_{R_i}(f)\geq R_i+\phi(R_i)-s-R_i=\phi(R_i)-s$. 

Let $R>R_1$ be given and let $i\in\N$ be the unique index such that $R_{i-1}<R\leq R_{i}$. Then using the assumption on the growth of $\phi(R_{i+1})$, we can write
\[
\disp_R(f)\geq\disp_{R_{i-1}}(f)\geq\phi(R_{i-1})-s\geq\frac{\phi(R_i)}{K}-s\geq\frac{\phi(R)}{K}-s.
\]
The last quantity is greater than, say, $\phi(R)/2K$ for every $R$ large enough.
\end{proof}

\begin{lemma}\label{lem:BD_constr_upper_bound}
Let $\phi, (R_i)_{i\in\N}, X, Y$ and $g$ be as in Construction~\ref{construction:BD_slow_growth_new}. If, in addition, there is $K>0$ such that $\phi(R_{i+1})\leq K\phi(R_{i})$ for every $i\in\N$, then $\disp_R(g)\in O(\phi(R))$.
\end{lemma}
\begin{proof}
Since $R_i$ is strictly increasing and $\phi$ is increasing, we have $c_i\leq 1$ 
for every $i\in\N$. Because $\gamma$ is a piecewise affine function with slopes $c_{i}\leq1$ and $\gamma(0)=0$ the distance from $\gamma$ to the identity is an increasing function (with respect to $[0,R]$ with $R$ variable). This, in turn, means that the displacement of $g$ on the ball $\cl{B}(\mb{0},R)$ is realised on the points of $X$ closest to the boundary of the ball.
Now, we immediately get the bound
\[
\disp_{\cl{R}_i}(g)\leq \cl{R}_i-\gamma(\cl{R}_i)=R_i+\phi(R_i)-R_i=\phi(R_i).
\]

Fix $R>R_1+\phi(R_1)$ and choose the smallest $i\in\N$ such that $R\leq R_i+\phi(R_i)$. Then the growth condition on $\phi(R_{i})$ allows us to derive the bound
\begin{align*}
	\disp_R(g)&\leq\disp_{R_i+\phi(R_i)}(g)\leq \phi(R_i)\leq K\phi(R_{i-1})\\
	&\leq K\phi(R_{i-1}+\phi(R_{i-1}))\leq K\phi(R),
\end{align*}
where the last inequality is true thanks to the choice of $i$.
\end{proof}

Finally, we are ready to finish off the proof of Theorem~\ref{thm:displacement_class}:
\begin{proof}[Proof of Theorem~\ref{thm:displacement_class}]
We may assume that $\phi$ is unbounded, otherwise we may simply choose $Y$ as a non-zero but small perturbation of $X$. Such $Y$ is $BD$, and thus, also BL equivalent to $X$, while every bijection $X\to Y$ needs to displace the perturbed points by a non-zero distance.

We choose any $K>1$ and set $R_i:=K^i$ for ever $i\in\N$. This choice satisfies all the assumptions on $(R_i)$ in Proposition~\ref{prop:constr_gives_sep_net} and Lemmas~\ref{lem:BD_constr_lower_bound} and \ref{lem:BD_constr_upper_bound}, where the $\phi(R_{i+1})\leq K\phi(R_{i})$ assumption of the latter two statements is satisfied due to the concavity of $\phi$.
We apply Construction~\ref{construction:BD_slow_growth_new} using these objects and obtain a set $Y$ and a bijection $g\colon X\to Y$. Proposition~\ref{prop:constr_gives_sep_net} says that $Y$ is a separated net and $g\colon X\to Y$ witnesses the BL equivalence of $X$ and $Y$.
Applying Lemma~\ref{lem:BD_constr_upper_bound} we get that $\disp_R(g)\in O(\phi(R))$, from which $\disp_{R}(g^{-1})\in O(\phi(R))$ follows via Proposition~\ref{prop:BD_of_func_and_its_inverse}. Now let $f\colon X\to Y$ be a bijection. By Lemma~\ref{lem:BD_constr_lower_bound}, it holds that $\disp_R(f^{-1})\in\Omega(\phi(R))$.
Let $\Psi\colon (0,\infty)\to(0,\infty)$ be a concave majorant of $t\mapsto \disp_{t}(f)$ with $\disp_{R}(f)\notin o(\Psi(R))$, given by Lemma~\ref{lemma:concave_maj}. Then, applying Proposition~\ref{prop:BD_of_func_and_its_inverse}, we infer that $\disp_{R}(f^{-1})\in O(\Psi(R))\cap \Omega(\phi(R))$, which implies $\phi(R)\in O(\Psi(R))$. This, together with $\disp_{R}(f)\notin o(\Psi(R))$, implies $\disp_{R}(f)\notin o(\phi(R))$.
\end{proof}
	
%%%%%%%%%%%%%%%%%%%%%%%%%%%%%%%%%%%
%%%%%%% 	BD_equiv_cls	%%%%%%%
%%%%%%%%%%%%%%%%%%%%%%%%%%%%%%%%%%%	
	\section{Continuously many, pairwise distinct BD equivalence classes.}\label{sec:BD_class}
The objective of the present section is to prove Theorem~\ref{thm:BD_equiv_classes}, whose statement we repeat for the reader's convenience:
\BDclasses
The proof of Theorem~\ref{thm:BD_equiv_classes} is based on the following proposition:
\begin{prop}\label{prop:distinct_disp_distinct_BD}
	Let $d\in\N$, $X$ be a separated net in $\R^{d}$ and $\phi_{1},\phi_{2}\colon (0,\infty)\to (0,\infty)$ be increasing, unbounded and concave functions such that $\phi_{i}(R)\in o(R)$ for $i\in[2]$ and $\phi_1(R)\in o(\phi_2(R))$. Let $Y_{1},Y_{2}\subseteq \R^{d}$ be separated nets 
	such that $\phi_1\in \disp_{R}(X,Y_{1})$	
	and $\disp_{R}(X,Y_{2})\cap o(\phi_2(R))=\emptyset$. Then $Y_{1}$ and $Y_{2}$ are BD non-equivalent.
\end{prop}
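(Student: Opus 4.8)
The plan is to argue by contradiction: suppose $Y_1$ and $Y_2$ are BD equivalent, witnessed by a bijection $h\colon Y_1\to Y_2$ with $\disp(h)=\lnorm{\infty}{h-\id}<\infty$, i.e.\ $\disp_R(h)\le D$ and $\disp_R(h^{-1})\le D$ for some constant $D>0$ and all $R>0$. By hypothesis there is a bijection $f\colon X\to Y_1$ witnessing $\phi_1\in\disp_R(X,Y_1)$, so $\disp_R(f)\le K\phi_1(R)$ for some $K>0$ and all large $R$; since $\phi_1(R)\in o(R)$, Proposition~\ref{obs:BD_of_func_and_its_inverse} (via a concave majorant, as in Corollary~\ref{cor:sublinear_disp}) gives $\disp_R(f^{-1})\in O(\phi_1(R))$ as well. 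The idea is then to build the composite bijection $g:=h\circ f\colon X\to Y_2$ and show $\disp_R(g)\in o(\phi_2(R))$, contradicting $\disp_R(X,Y_2)\cap o(\phi_2(R))=\emptyset$.

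First I would estimate $\disp_R(g)$ directly. For $x\in X\cap\cl{B}(\mb 0,R)$,
\[
\enorm{g(x)-x}\le\enorm{h(f(x))-f(x)}+\enorm{f(x)-x}\le D+\disp_R(f)\le D+K\phi_1(R),
\]
using that $\enorm{f(x)}\le R+\disp_R(f)$ but more simply that $h$ displaces every point by at most $D$ regardless of location. Hence $\disp_R(g)\le D+K\phi_1(R)$. Since $\phi_1$ is unbounded, $D\in o(\phi_1(R))$, so $\disp_R(g)\in O(\phi_1(R))$; and since $\phi_1(R)\in o(\phi_2(R))$ by assumption, we conclude $\disp_R(g)\in o(\phi_2(R))$. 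This exhibits a bijection $X\to Y_2$ whose restricted displacement is $o(\phi_2(R))$. Now take a concave majorant $\psi$ of $R\mapsto\disp_R(g)$ coinciding with it infinitely often (Remark~\ref{rem:concave}); then $\psi$ is increasing, concave, $\psi(R)\in o(\phi_2(R))\subseteq o(R)$, and $\disp_R(g)\le\psi(R)$, so by Proposition~\ref{obs:BD_of_func_and_its_inverse}, $\disp_R(g^{-1})\in O(\psi(R))\subseteq o(\phi_2(R))$. Thus $\psi\in\disp_R(X,Y_2)$ with $\psi\in o(\phi_2(R))$, contradicting the hypothesis $\disp_R(X,Y_2)\cap o(\phi_2(R))=\emptyset$.

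The only subtlety is the bookkeeping needed to land inside the class $\disp_R(X,Y_2)$, which by Definition~\ref{def:disp_cls} consists of \emph{increasing, concave} functions: one cannot feed $\disp_R(g)$ itself into the class, hence the passage to a concave majorant $\psi$ and the invocation of Proposition~\ref{obs:BD_of_func_and_its_inverse} to control $\disp_R(g^{-1})$ as well (the hypothesis $\phi_i(R)\in o(R)$ is exactly what makes that proposition applicable). I do not expect a serious obstacle here — the argument is essentially a triangle-inequality composition estimate together with the observation that a bounded displacement perturbation is negligible against any unbounded $\phi_1$, and that $o(\phi_1)\subseteq o(\phi_2)$ by the hypothesis $\phi_1\in o(\phi_2)$. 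The main thing to be careful about is ensuring that the chosen $\psi$ simultaneously dominates $\disp_R(g)$ and inherits the $o(\phi_2(R))$ bound; choosing the piecewise-affine majorant that coincides with $\disp_R(g)$ infinitely often and checking $\psi(R)\in o(\phi_2(R))$ using concavity of $\phi_2$ and $\disp_R(g)\in o(\phi_2(R))$ settles this.
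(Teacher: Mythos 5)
Your proposal is correct and takes essentially the same route as the paper: assume BD equivalence, compose the bounded-displacement bijection with the bijection witnessing $\phi_1\in \disp_{R}(X,Y_{1})$, and use the triangle inequality to produce a bijection between $X$ and $Y_2$ with displacement $O(\phi_{1}(R))\subseteq o(\phi_{2}(R))$, contradicting $\disp_{R}(X,Y_{2})\cap o(\phi_{2}(R))=\emptyset$. The differences are cosmetic: your composition order ($h\circ f$, with the BD map applied last) makes the estimate slightly simpler than the paper's (which needs $\phi_{1}(R+\disp(f))\leq K'\phi_1(R)$), and your final detour through a concave majorant $\psi$ and the inverse $g^{-1}$ is unnecessary, since $\phi_{1}$ itself is increasing, concave and satisfies $\disp_R(g)\in O(\phi_1(R))$, so it already lies in $\disp_{R}(X,Y_{2})\cap o(\phi_{2}(R))$.
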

\begin{proof}
	Assume for a contradiction that $Y_{1}$ and $Y_{2}$ are BD equivalent and consider a bijection $f\colon Y_{2}\to Y_{1}$ for which
	\[
		\disp(f)=\sup_{x\in Y_{2}}\lnorm{2}{f(x)-x}<\infty.
	\]
Let $g\colon Y_{1}\to X$ be a bijection for which $\disp_{R}(g)\in O(\phi_{1})$
and let $K>0$ be sufficiently large so that $\disp_{R}(g)\leq K\phi_{1}(R)$ for all $R>1$. Then, we may define a bijection $h\colon Y_{2}\to X$ by $h:=g\circ f$. Let us estimate the asymptotic growth of $\disp_{R}(h)$: fix $R>1$ and $x\in Y_{2}\cap \overline{B}(0,R)$. Then $f(x)\in Y_{1}\cap \overline{B}(0,R+\disp(f))$, from which it follows that $\lnorm{2}{g(f(x))-f(x)}\leq K\phi_{1}(R+\disp(f))$. Now we may write
\begin{align*}
	\enorm{h(x)-x}&\leq \enorm{g(f(x))-f(x)}+\enorm{f(x)-x}\\
	&\leq K\phi_{1}(R+\disp(f))+\disp(f)\\
	&\leq K'\phi_{1}(R),
\end{align*}
which is true for some $K'>K$ independent of $R$.
We deduce that $h\colon Y_{2}\to X$ is a bijection satisfying $\disp_{R}(h)\in O(\phi_{1}(R))\subseteq o(\phi_{2}(R))$, contrary to 
$\disp_{R}(X,Y_{2})\cap o(\phi_2(R))=\emptyset$. 
\end{proof}

\begin{proof}[Proof of Theorem~\ref{thm:BD_equiv_classes}]
Fix $d\in\N$ and a representative $X$ of a given BL equivalence class of separated nets in $\R^{d}$. Let $\Lambda$ denote the set of all increasing, unbounded and concave functions $(0,\infty)\to (0,\infty)$. For each $\phi\in\Lambda$, we apply Theorem~\ref{thm:displacement_class} to obtain a separated net $Y_{\phi}$ in $\R^{d}$ belonging to the same BL equivalence class as $X$ and satisfying 
$\disp_{R}(X,Y)\cap o(\phi(R))=\emptyset$. 
Now, Proposition~\ref{prop:distinct_disp_distinct_BD} verifies that the family of separated nets $(Y_{\phi})_{\phi\in \Lambda}$ contains uncountably many pairwise BD non-equivalent separated nets.
\end{proof}
	
%%%%%%%%%%%%%%%%%%%%%%%%%%%%%%%%%%%
%%%%%%% 	BL_vs_BD_growth	%%%%%%%
%%%%%%%%%%%%%%%%%%%%%%%%%%%%%%%%%%%
	\section{The intersection between the BL classes and the classes of bounded growth of displacement}\label{sec:BL_inter_disp}

In this section we prove Theorem~\ref{thm:bil_neq_slow_disp}:
\slowdisp
The proof of Theorem~\ref{thm:bil_neq_slow_disp} will require several lemmas, some of which are quite technical. Therefore, we first describe the main ideas of the proof informally.

The proof consists of three main ingredients. The first one is an observation that in order to construct a separated net $X$ in $\R^d$ that is not bilipschitz equivalent to $\Z^d$ it is possible to start with $\Z^d$ and modify it only inside a collection of pairwise disjoint cubes $(S_k)_{k\in\N}$ of increasing size; see Figure~\ref{f:BL_vs_BD}. The actual position of these cubes, as long as they remain disjoint, is irrelevant with respect to bilipschitz non-equivalence with $\Z^d$.

Moreover, we can actually get that there is even no bilipschitz injection $X\to\Z^d$ which would also be a bijection between a neighbourhood of $S_k$ and a certain neighbourhood of the image of $S_k$, for each $k$ separately; this is based on the work of Burago and Kleiner~\cite[Lemma~2.1]{BK1} and formalised in Lemma~\ref{lemma:BK} below.
\begin{figure}
	\begin{center}
		\includegraphics[width=\linewidth]{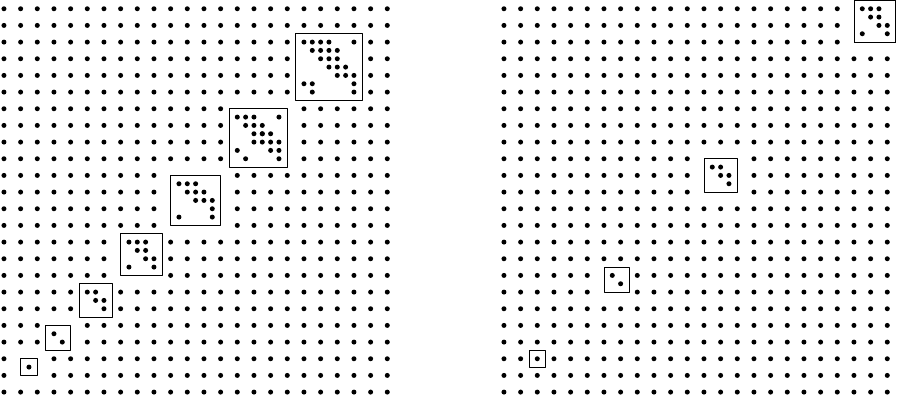}
		\caption{Examples of `very different' separated nets $X_1$ (left) and $X_2$ (right). The cubes in $(S_k^1)$ are the same as in $(S_k^2)$, but the distance of $S_k^1$ to the origin grows with $k$ much slower than that of $S_k^2$.}
		\label{f:BL_vs_BD}
	\end{center}
\end{figure}

The second ingredient is exploiting the last property of the construction of $X$ mentioned above. It sometimes allows us to rule out bilipschitz bijections between two different nets $X_{1}$ and $X_{2}$ arising in the way described above, instead of just bilipschitz bijections between $X$ and $\Z^d$. We will see that if $f\colon X_1\to X_2$ is a bijection such that infinitely many of the cubes $(S_k^1)$ used to construct $X_1$ are mapped by $f$ to parts of $X_2$ equal to $\Z^d$, then $f$ is not bilipschitz; see Lemma~\ref{lemma:bilip_nonequiv}. Thus, if we place the cubes $(S_k^1)$ `very differently' inside $\R^d$ in comparison to the cubes $(S_k^2)$ used to define $X_2$ (see Figure~\ref{f:BL_vs_BD}), we may hope that for every bilipschitz mapping $f\colon X_1\to \R^d$ there will be infinitely many $i\in\N$ such that the image $f(S_i^1)$ will miss all cubes in $(S_k^2)$; this is substantiated in Lemma~\ref{lemma:miss_inft_many}. Since it is possible to come up with uncountably many `very different' ways how to place the cubes $(S_k)$ in $\R^d$, we will obtain an uncountable family $(X_\psi)_{\psi\in\Lambda}$ of pairwise bilipschitz non-equivalent separated nets.

The last ingredient is responsible for showing that each of the nets $X_{\psi}$ in the family described above is $\phi$-displacement equivalent to $\Z^{d}$. We observe that the construction of each $X:=X_{\psi}$ inside its corresponding collection $(S_k)$ can ensure that $\abs{X\cap S_k}=\abs{\Z^d\cap S_k}$ for every $k\in\N$---this is the purpose of Lemma~\ref{lemma:Gammak}. Since outside $\bigcup S_{k}$ each $X$ is equal to $\Z^d$, this allows us to define a bijection $X\to\Z^d$ with controlled growth of displacement: Outside $(S_k)$ we use the identity function and inside $S_k$ we can use any bijection $X\cap S_k\to\Z^d\cap S_k$. The displacement of the resulting bijection on $\cl{B}(0,R)$ is then no larger than the diameter of the largest $S_k$ intersecting $\cl{B}(0,R)$; this is formalized in Lemma~\ref{lemma:disp_X_psi}.

We continue providing formal arguments for the claims outlined above.
\begin{lemma}\label{lemma:miss_inft_many}
Let $F\colon D\subseteq \R^{d}\to \R^{d}$ be a bilipschitz mapping, $\psi_1, \psi_2\colon(0,\infty)\to(0,\infty)$ be two increasing functions such that $\psi_2(R+K)\in o(\psi_1(R))$ for any fixed $K\in\N$ and $(U_k)_{k\in\N}$ be a sequence of
cubes in $\R^d$ with $\diam U_k$ increasing and $\diam U_k\in o(\psi_2(k))$.
Moreover, we assume that $g_1,g_2\colon \bigsqcup_{k\in\N}U_k\to\R^d$ are mappings such that
\begin{enumerate}[1.]
	\item\label{l_i:large_dist} $\dist\br*{g_1(U_k),g_1\br*{\bigcup_{j\neq k} U_{j}}}\geq \psi_1(k)$ for every $k\in\N$, 
	\item\label{l_i:small_dist} $\dist\br*{g_2(U_k),g_2\br*{\bigcup_{j\neq k} U_{j}}}=\dist\br*{g_2(U_k),g_2\br*{U_{k-1}}}= \psi_2(k)$ for every $k\geq 2$, 
	\item\label{l_i:piecewise_trans} $\rest{g_i}{U_k}$ is a translation for every $i=1,2$ and $k\in\N$.
\end{enumerate}
Then there are infinitely many $i\in\N$
such that
\[
F\br*{D\cap \bigcup_{k\in\N} g_1(U_k)}\cap g_2(U_i)=\emptyset.
\]
\end{lemma}
\begin{proof}
Since $F$ is defined only on the set $D$, in every application of $F$ in this proof the argument of $F$ should always be intersected with $D$ to ensure that the whole expression is well-defined; however, to improve the readability of formulas, we omit it.

We define $i(k):=\max\set{i\in\N\colon F(g_1(U_k))\cap g_2(U_i)\neq\emptyset}$; if the set over which the maximum is taken is empty, we set $i(k)$ to $\infty$. Let
\[C:=\max\set{\lip(F),\lip(F^{-1})}.\] 

We split the proof into two cases. First, we assume that there is $A\in\N$ such that for every $k\in\N$ there is $n:=n(k)\in\N, n\geq k$ such that $i(n)\leq n+A$. Fix $k\in\N$ and $n=n(k)$.
Condition \ref{l_i:small_dist} on $g_2$ implies that $\dist(g_2(U_{i(n)}), g_2(U_{i(n)+1}))\leq \psi_2(n+A+1)$.
From Condition~\ref{l_i:large_dist}
we get that $\dist\br*{F\circ g_1(U_{n}), F\circ g_1\br*{\bigcup_{j\neq n}U_{j}}}\geq\psi_1(n)/C$.
Next, we write
\begin{multline*}
\dist\br*{F\br*{\bigcup_{j\neq n}g_1(U_{j})}, g_2(U_{i(n)+1})}\\
\geq \frac{\psi_1(n)}{C}-\diam g_2(U_{i(n)})-\psi_2(n+A+1)-\diam g_2(U_{i(n)+1}).
\end{multline*}
Note that $\diam g_2(U_{i(n)}),\diam g_2(U_{i(n)+1}) \in o(\psi_2(n+A+1))$ according to the assumptions. Thanks to the assumption $\psi_2(R+A+1)\in o(\psi_1(R))$, we get that $F\br*{\bigcup_{j\in\N}g_1(U_{j})}\cap g_2(U_{i(n)+1})=\emptyset$ provided $k$ (and thus $n$) is large enough. This establishes the assertion in the present case.

Next we assume that for every $A\in\N$ it holds that $i(k)>k+A$ for every $k$ large enough. In particular, there exists $k_{0}\in\N$ such that $i(k)>k$ for every $k\geq k_0$. Moreover, we assume that $k_0$ is large enough so that whenever $k\geq k_0$ and $F\circ g_1(U_k)\cap g_2(U_{i'})\neq\emptyset$, we have $i'=i(k)$. This is possible, as
either no such indices $i'$ exist, or
$F\circ g_1(U_k)\cap g_2(U_{i(k)})\neq\emptyset$ and $\dist\br*{g_2(U_{i(k)}),g_2\br*{\bigcup_{j\neq i(k)} U_j}}=\psi_2(i(k))\geq \psi_2(k)$ according to Condition~\ref{l_i:small_dist} in the present case. Now it suffices to use the fact that $\diam F\circ g_1(U_k)\in o(\psi_2(k))$, which follows from the assumptions.

We continue by contradiction: assume that there is $i_0\in\N$ such that for every $i\geq i_0$ it holds that $F\br*{\bigcup_{k\in\N}g_1(U_k)}\cap g_2(U_i)\neq\emptyset$. We will also assume that $i_0>\max\set{i(j)\colon j\in\N, j\leq k_0, i(j)<\infty}$. Given the property of $i(\cdot)$ proven above this means that for every $i\geq i_0$ there is $k\geq k_0$ such that $i=i(k)$. Let $k_1\in\N$, $k_1\geq k_0$ be a number satisfying $i(k)\geq i_0$ for every $k\geq k_1$. Next we choose $K\in\N$ such that either $i(k)\leq k+K$, or $i(k)=\infty$
for every $k<k_1$. Furthermore, we choose $k_2\in\N, k_2\geq k_1$ large enough so that $i(k)>k+K$ for every $k\geq k_2$.
In consequence, for every $k\geq k_2$ the set
\[
\set{i\in\N\colon F\circ g_1\br*{\bigcup_{j\leq k}U_j}\cap g_2(U_i)\neq\emptyset}
\]
can contain at most $k-k_1$ numbers within the set $\set{k_1+K, \ldots, k+K}$. But this, in turn, means that there is $l\in\N, l>k$ such that $i(l)\leq k+K$. At the same time, $i(l)>l+K\geq k+K+1$; a contradiction.
\end{proof}

\begin{lemma}\label{lemma:BK}
	Let $\rho\colon [0,1]^{d}\to (0,\infty)$ be a measurable function with $0<\inf\rho\leq\sup\rho<\infty$ and the property that the equation $\Phi_{\sharp}\rho\leb=\leb|_{\Phi([0,1]^{d})}$ has no bilipschitz solutions $\Phi\colon [0,1]^{d}\to\R^{d}$. Let $(R_{k})_{k\in\N}$ and $(S_{k})_{k\in\N}$ be  sequences of pairwise disjoint cubes in $\R^{d}$ such that $\diam R_{k}$ and $\diam S_{k}$ are unbounded and increasing and $2S_{k}\subseteq R_{k}$ for every $k\in\N$, where $2S_{k}$ denotes the cube with the same midpoint as $S_{k}$ and sidelength twice the sidelength of $S_{k}$. For each $k\in\N$, let $\phi_{k}\colon \R^{d}\to\R^{d}$ denote the unique affine mapping $\R^{d}\to\R^{d}$ with scalar linear part satisfying $\phi_{k}([0,1]^{d})=S_{k}$. For each $k\in\N$, let $\Upsilon_{k}$ be a finite subset of $R_{k}$ such that $\bigcup_{k\in\N}\Upsilon_{k}$ is a separated net of $\bigcup_{k\in\N}R_{k}$ and the normalised counting measure on the set $\phi_{k}^{-1}(\Upsilon_{k}\cap S_{k})$ converges weakly to $\rho\leb$. Let $h\colon \bigcup_{k\in\N}\Upsilon_{k}\to \Z^{d}$ be an injective mapping such that 
	\begin{equation}\label{eq:buffer}
	B(h(\Upsilon_{k}\cap S_{k}),\diam S_{k})\cap \Z^{d}\subseteq h(\Upsilon_{k})
	\end{equation}
	for each $k\in\N$. Then $h$ is not bilipschitz. In fact, 
	\begin{equation}\label{eq:sup_bil_cst}
	\sup_{k\in\N}\max\set{\lip(h|_{\Upsilon_{k}}),\lip((h|_{\Upsilon_{k}})^{-1})}=\infty.
	\end{equation}
\end{lemma}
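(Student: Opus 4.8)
The plan is to argue by contradiction: suppose $h$ is bilipschitz with $\sup_k \max\{\lip(h|_{\Upsilon_k}),\lip((h|_{\Upsilon_k})^{-1})\} = C < \infty$, and derive a bilipschitz solution to the equation $\Phi_\sharp \rho\leb = \leb|_{\Phi([0,1]^d)}$, contradicting the hypothesis on $\rho$. The idea is that on each block $S_k$, the map $h$ pushes the (approximately $\rho\leb$-distributed, after rescaling by $\phi_k$) point set $\Upsilon_k \cap S_k$ onto a subset of $\Z^d$ which, by the buffer condition \eqref{eq:buffer}, must contain \emph{all} of $\Z^d$ in a neighbourhood of $h(\Upsilon_k\cap S_k)$ of radius $\diam S_k$; so $h$ restricted to this block looks like a bilipschitz bijection from a $\rho\leb$-distributed net onto (a large chunk of) the standard lattice $\Z^d$. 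Rescaling both sides to unit size and letting $k\to\infty$, the uniform bilipschitz bound lets us extract (via Arzelà--Ascoli, after extending the discrete maps to genuine Lipschitz maps on the cube using Kirszbraun's theorem as in the proof of Lemma~\ref{lem:disp_sublin_necessary}) a limiting bilipschitz map $\Phi$ on $[0,1]^d$; the weak convergence of the normalised counting measures on $\phi_k^{-1}(\Upsilon_k\cap S_k)$ to $\rho\leb$ and of the counting measures on the lattice side to $\leb$, combined with a pushforward-under-weak-limits argument (of the type \cite[Lem.~5.6]{DKK2018} invoked earlier), then forces $\Phi_\sharp \rho\leb = \leb|_{\Phi([0,1]^d)}$, the sought contradiction.

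More concretely, first I would fix the contradiction hypothesis and normalise: let $\sigma_k$ denote the sidelength of $S_k$, so $\phi_k(y) = \phi_k(0) + \sigma_k y$. Define the rescaled point sets $P_k := \phi_k^{-1}(\Upsilon_k \cap S_k) \subseteq [0,1]^d$ and the rescaled images $Q_k := \frac{1}{\sigma_k}\left(h(\Upsilon_k \cap S_k) - h(x_k^\ast)\right)$ for a suitably chosen base point $x_k^\ast \in \Upsilon_k \cap S_k$ (subtracting $h(x_k^\ast)$ recentres the images near the origin). Define $F_k\colon P_k \to \R^d$ by $F_k(y) := \frac{1}{\sigma_k}\left(h(\phi_k(y)) - h(x_k^\ast)\right)$. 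The bilipschitz bound on $h|_{\Upsilon_k}$ transfers verbatim to a $C$-bilipschitz bound on $F_k$ (scaling is bilipschitz-invariant). Since $\Upsilon_k\cap S_k$ becomes asymptotically dense in $S_k$ in the sense that the counting measures converge weakly to $\rho\leb$ — and since $\inf\rho > 0$ — the sets $P_k$ are, for $k$ large, $o(1)$-dense in $[0,1]^d$; so by Kirszbraun each $F_k$ extends to a $C$-Lipschitz map $\bar F_k\colon [0,1]^d \to \R^d$ whose inverse-type Lipschitz control also survives (one needs the extension to remain injective-in-the-limit, but this follows because the lower Lipschitz bound $\enorm{F_k(y)-F_k(y')} \geq C^{-1}\enorm{y-y'}$ on the dense set $P_k$ passes to the uniform limit). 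By Arzelà--Ascoli, after passing to a subsequence, $\bar F_k \to \Phi$ uniformly on $[0,1]^d$ with $\Phi$ being $C$-bilipschitz.

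Next I would identify the pushforward. On the domain side, $(\phi_k^{-1})_\sharp$ of the normalised counting measure on $\Upsilon_k\cap S_k$ converges weakly to $\rho\leb$ by hypothesis; pushing forward by $F_k$ (equivalently, $\bar F_k$ on the relevant set) and using uniform convergence $\bar F_k\to\Phi$, the measures $(\bar F_k)_\sharp(\text{normalised counting on } P_k)$ converge weakly to $\Phi_\sharp(\rho\leb)$. On the image side, $(\bar F_k)_\sharp(\text{normalised counting on } P_k)$ is, up to normalisation, the rescaled counting measure on $Q_k \subseteq \frac{1}{\sigma_k}(\Z^d - h(x_k^\ast))$; the buffer condition \eqref{eq:buffer} guarantees that $Q_k$ contains every lattice point of $\frac{1}{\sigma_k}(\Z^d - h(x_k^\ast))$ lying in a ball of radius $\geq 1$ about each of its own points, so $Q_k$ is "hole-free at scale $1/\sigma_k$" and fills out $\bar F_k([0,1]^d)$; since the rescaled lattice mesh $1/\sigma_k \to 0$, the rescaled counting measure on $Q_k$ converges weakly to $\leb|_{\Phi([0,1]^d)}$ (standard lattice-counting asymptotics, handled exactly as the $\nu_n \rightharpoonup \leb$ step in the proof of Lemma~\ref{lem:disp_sublin_necessary}). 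Equating the two weak limits gives $\Phi_\sharp(\rho\leb) = \leb|_{\Phi([0,1]^d)}$ with $\Phi$ bilipschitz, contradicting the defining property of $\rho$; hence no finite $C$ exists, which is precisely \eqref{eq:sup_bil_cst}.

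The main obstacle I anticipate is bookkeeping around the boundary and the base points: ensuring that subtracting $h(x_k^\ast)$ really does keep the images bounded (this needs the upper Lipschitz bound applied across $\Upsilon_k \cap S_k$, whose $\phi_k$-preimage has diameter $\leq \sqrt d$), and that the buffer condition \eqref{eq:buffer} — which is stated with radius $\diam S_k = \sqrt d\,\sigma_k$, i.e. radius $\sqrt d$ after rescaling — genuinely forces $Q_k$ to have no macroscopic holes inside $\Phi([0,1]^d)$ in the limit, including near the boundary of $[0,1]^d$ where the extended map $\bar F_k$ is only a Kirszbraun extension and $P_k$ might be slightly sparser. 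A clean way to sidestep the boundary issue is to prove the measure identity only on a slightly shrunken cube $[\epsilon, 1-\epsilon]^d$ for every $\epsilon > 0$ and then let $\epsilon \to 0$; on such interior cubes $P_k$ is eventually dense and the buffer of radius $\sqrt d$ comfortably covers the image. The rest — transferring Lipschitz constants under affine rescaling, Kirszbraun extension, Arzelà--Ascoli, and the lattice-counting weak limit — is routine and parallels arguments already carried out in Lemma~\ref{lem:disp_sublin_necessary}.
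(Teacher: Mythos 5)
Your overall architecture (contradiction, rescale by $\phi_k$, Kirszbraun extension, Arzel\`a--Ascoli, identify the limit of the pushforward measures with $\leb|_{\Phi([0,1]^{d})}$) matches the paper's proof, but the step where you identify the image-side limit has a genuine gap, and it is exactly the subtle point of the lemma. You assert that \eqref{eq:buffer} ``guarantees that $Q_k$ contains every lattice point \dots lying in a ball of radius $\geq 1$ about each of its own points'', i.e.\ that the rescaled image of $\Upsilon_k\cap S_k$ is hole-free. This misreads the hypothesis: \eqref{eq:buffer} says that lattice points within $\diam S_k$ of $h(\Upsilon_k\cap S_k)$ lie in $h(\Upsilon_k)$, the image of the \emph{whole} of $\Upsilon_k$, not in $h(\Upsilon_k\cap S_k)=Q_k$ (up to rescaling). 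So a rescaled lattice point sitting deep inside $\bar F_k([0,1]^d)$ may perfectly well be missed by $Q_k$ and instead be the image of a point of $\Upsilon_k\setminus S_k$; nothing in your argument excludes this, and if it happened on a set of positive density the counting measures on $Q_k$ would \emph{not} converge to $\leb|_{\Phi([0,1]^{d})}$. Your proposed fix (working on $[\epsilon,1-\epsilon]^d$) does not touch this problem, since these potential holes are lattice points in the image, not points of the domain near $\partial[0,1]^d$.

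The paper closes precisely this gap by exploiting the hypothesis $2S_k\subseteq R_k$, which your proposal never uses: the maps are extended not on $[0,1]^d$ but on $\phi_k^{-1}(2S_k)=\left[-\tfrac12,\tfrac32\right]^d$, where $\phi_k^{-1}(\Upsilon_k)$ is still a fine net because $\Upsilon_k$ is a separated net of all of $R_k$, and the uniform limit $f$ is bilipschitz, hence injective, on this larger cube. Then one shows that every lattice point of $A_k:=f_k([0,1]^d)\cap\frac{1}{l_k}\Z^d\setminus f_k(\Gamma_k)$ is, by \eqref{eq:buffer} and the net property, of the form $f_k(x)$ with $x\in\phi_k^{-1}(\Upsilon_k)\setminus[0,1]^d$; injectivity of $f$ on the doubled cube forces $f(x)\notin f([0,1]^d)$, so $f_k(x)$ can lie in (or near) $f([0,1]^d)$ only within distance $\lnorm{\infty}{f_k-f}$ of $\partial f([0,1]^d)$, and the contribution of such points vanishes. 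Without extending over the doubled cube and invoking injectivity of the limit there, your ``hole-free, hence converges to $\leb|_{\Phi([0,1]^{d})}$'' step is unsupported; with it, the rest of your outline (which is essentially the paper's) goes through.
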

\begin{proof}
	The argument of the present proof in its original form is due to Burago and Kleiner; see \cite[Proof of Lemma~2.1]{BK1}. Moreover, a more detailed presentation of the argument is given by the present authors in \cite[Proof of Lemma~3.4]{dymond_kaluza2019highly}. Therefore, we present the first part of the proof here quite succinctly, leaving several verifications to the reader, which may be thought of as exercises. For further details, we refer the reader  to the works \cite{BK1} and \cite{dymond_kaluza2019highly}.
	
	Observe that
	\[
	\phi_{k}^{-1}(2S_{k})=\left[-\frac{1}{2},\frac{3}{2}\right]^{d}\supset [0,1]^{d}=\phi_{k}^{-1}(S_{k})
	\]
	for all $k$. Suppose for a contradiction that the supremum of \eqref{eq:sup_bil_cst} is finite. Then, denoting by $l_{k}$ the sidelength of the square $S_{k}$, we deduce that the mappings $f_{k}:=\frac{1}{l_{k}}h\circ \phi_{k}$, extended using Kirszbraun's theorem from 
	\[
	\cl{\Gamma}_{k}:=\phi_{k}^{-1}(\Upsilon_{k})\cap \left[-\frac{1}{2},\frac{3}{2}\right]^{d}
	\]
	 to the cube $\left[-\frac{1}{2},\frac{3}{2}\right]^{d}$, are uniformly Lipschitz and, after composing each $f_{k}$ with a translation if necessary so that the image of every $f_{k}$ contains $\mb{0}$, they are also uniformly bounded. Applying the Arzel\`a-Ascoli theorem, we may pass to a subsequence of $(f_{k})_{k\in\N}$ which converges uniformly to a Lipschitz mapping $f\colon \left[-\frac{1}{2},\frac{3}{2}\right]^{d}\to \R^{d}$. Using the fact that each $f_{k}$ is bilipschitz on the finer and finer net $\cl{\Gamma}_{k}$ of $\left[-\frac{1}{2},\frac{3}{2}\right]^{d}$, we deduce that $f$ is also bilipschitz. 
	
	Let $\mu_{k}$ denote the normalised counting measure on 
	\[
	\Gamma_{k}:=\phi_{k}^{-1}(\Upsilon_{k}\cap S_{k})
	\]
	so, by hypothesis, $\mu_{k}$ converges weakly to $\rho\leb$. We claim that the pushforward measures $(f_{k}|_{[0,1]^{d}})_{\sharp}\mu_{k}$ converge weakly to the Lebesgue measure on $f([0,1]^{d})$. This claim, together with the uniform convergence of $f_{k}$ to $f$, implies that $f_{\sharp}\rho\leb=\leb|_{f([0,1]^{d})}$, contrary to the hypothesis on $\rho$.
	
	Therefore, to complete the proof, it only remains to verify the claim, that is, to prove that $(f_{k}|_{[0,1]^{d}})_{\sharp}\mu_{k}$ converges weakly to $\leb|_{f([0,1]^{d})}$. This remaining part of the proof is more subtle. The argument we give here is not present in \cite{BK1}, but is an adaptation of \cite[Proof of Lemma~3.2]{dymond_kaluza2019highly}. Although the adaptation is quite simple, it requires good familiarity with the proof in \cite{dymond_kaluza2019highly} to construct it. Therefore, we provide more details here.
	
	Consider the sequence of measures
	\[
	\nu_{k}(A):=\frac{1}{l_{k}^{d}}\abs{A\cap \frac{1}{l_{k}}\Z^{d}}, \qquad A\subseteq \R^{d},\,k\in\N,
	\]
	which clearly converges weakly to the Lebesgue measure on $\R^{d}$. For a given continuous function $\varphi\colon\R^{d}\to\R$ with compact support we need to verify
		\begin{equation}\label{eq:weakconv}
		\abs{\niceint{f([0,1]^{d})}{\varphi}{\nu_{k}}-\niceint{{f}_k([0,1]^{d})}{\varphi}{(\rest{{f}_k}{[0,1]^{d}})_\sharp \mu_{k}}}\stackrel[k\to\infty]{}{\longrightarrow}0.
		\end{equation}

	We bound the expression in \eqref{eq:weakconv} above by the sum of two terms:
		\begin{align}\label{eq:weakconv_split}
		\begin{split}
		\abs{\niceint{f([0,1]^{d})}{\varphi}{\nu_{k}}-\niceint{{f}_k([0,1]^{d})}{\varphi}{\nu_{k}}}+\abs{\niceint{{f}_k([0,1]^{d})}{\varphi}{\nu_{k}}-\niceint{{f}_k([0,1]^{d})}{\varphi}{(\rest{{f}_k}{[0,1]^{d}})_\sharp \mu_{k}}}
		\end{split}
		\end{align}
	The first term is at most $\lnorm{\infty}{\varphi}\nu_{k}(f([0,1]^{d})\Delta {f}_{k}([0,1]^{d}))$, which vanishes as $k\to\infty$ due to the weak convergence of $\nu_{k}$ to $\leb$, the uniform convergence of $f_{k}$ to $f$ and the fact that $f$ is bilipschitz. We do not provide further details here; the verification is left as an exercise with reference to \cite[Lemma 3.1]{dymond_kaluza2019highly}. The second term may be bounded above by
		\begin{equation}\label{eq:second_term_bound}
		\frac{\lnorm{\infty}{\varphi}}{l_{k}^{d}}\abs{A_{k}},\qquad \text{where $A_{k}:={f}_{k}([0,1]^{d})\cap \frac{1}{l_{k}}\Z^{d}\setminus f_{k}(\Gamma_{k})$.}
		\end{equation}
	We will argue that
		\begin{equation}\label{eq:Ak_inclusion}
		A_{k}\subseteq \cl{B}\left(\partial f([0,1]^{d}),\lnorm{\infty}{{f}_{k}-f}\right)
		\end{equation}
	for all $k$ sufficiently large. Once this is established the quantity of \eqref{eq:second_term_bound} is seen to be at most
		\begin{equation*}
		\lnorm{\infty}{\varphi}\leb\br*{\cl{B}\br*{\partial f([0,1]^{d}),\lnorm{\infty}{{f}_{k}-f}+\frac{\sqrt{d}}{l_{k}}}},
		\end{equation*}
	which converges to zero as $k\to\infty$. Hence, to complete the verification of the weak convergence of $(\rest{{f}_{k}}{[0,1]^d})_{\sharp}\mu_{k}$ to $\leb|_{f([0,1]^{d})}$, we prove \eqref{eq:Ak_inclusion}.
	
	From now on we treat $k$ as fixed but sufficiently large. Recall that the sequence of mappings $f_{i}|_{\cl{\Gamma}_{i}}\colon \cl{\Gamma}_{i}\to\frac{1}{l_{i}}\Z^{d}$, $i\in\N$, is uniformly bilipschitz and set
	\begin{equation*}
	U:=\sup_{i\in\N}\max\set{\lip(f_{i}|_{\cl{\Gamma}_{i}}),\lip({f_{i}|_{\cl{\Gamma}_{i}}}^{-1})}<\infty.
	\end{equation*}
	Since the mappings $f_{i}\colon \left[-\frac{1}{2},\frac{3}{2}\right]^{d}\to\R^{d}$ were obtained as Kirszbraun's extensions of $f_{i}|_{\cl{\Gamma}_{i}}$, we additionally note that $\lip(f_{i})\leq U$ for all $i\in\N$.
	We also write $b$ for the maximum of the net constants of $\bigcup_{i=1}^{\infty}\Upsilon_{i}\cap S_{i}$ in $\bigcup_{i=1}^{\infty}S_{i}$ and of $\bigcup_{i=1}^{\infty}\Upsilon_{i}$ in $\bigcup_{i=1}^{\infty}R_{i}$. The condition \eqref{eq:buffer} 
	translates, after application of the homeomorphism $x\mapsto \frac{x}{l_{k}}$, to 
	\[
	B\left(f_{k}(\Gamma_{k}),\sqrt{d}\right)\cap \frac{1}{l_{k}}\Z^{d}\subseteq f_{k}(\phi_{k}^{-1}(\Upsilon_{k})).
	\]
	At the same time, $\Gamma_{k}$ is a $\frac{b}{l_{k}}$-net of $[0,1]^{d}$, so that $f_{k}([0,1]^{d})\subseteq \cl{B}(f_{k}(\Gamma_{k}),\frac{Ub}{l_{k}})$. Since $k$ is sufficiently large, it follows that 
	\[
	A_{k}\subseteq \left(B\left(f_{k}(\Gamma_{k}),\sqrt{d}\right)\cap \frac{1}{l_{k}}\Z^{d}\right)\setminus f_{k}(\Gamma_{k})\subseteq f_{k}(\phi_{k}^{-1}(\Upsilon_{k}))\setminus f_{k}(\Gamma_{k}).
	\]
	Thus, any point in $A_{k}$ has the form $f_{k}(x)$ for some 
	\[
	x\in \phi^{-1}_{k}(\Upsilon_{k}\setminus S_{k})=\cl{\Gamma}_{k}\setminus [0,1]^{d}.
	\]
	 If $f_{k}(x)\notin f([0,1]^{d})$ then $f_{k}(x)\in A_k\setminus f([0,1]^{d})\subseteq {f}_{k}([0,1]^{d})\setminus f([0,1]^{d})$, and therefore, $\dist(f_{k}(x),\partial f([0,1]^{d}))\leq\lnorm{\infty}{{f}_{k}-f}$.
	
	In the remaining case we have $f_{k}(x)\in f([0,1]^{d})$. Since $f$ is defined at $x\in \cl{\Gamma}_{k}\setminus [0,1]^{d}\subseteq \left[-\frac{1}{2},\frac{3}{2}\right]^{d}\setminus[0,1]^{d}$ and $f$ is injective, we additionally have $f(x)\notin f([0,1]^{d})$. Thus, we deduce that
			\[
			\dist(f_{k}(x),\partial f([0,1]^{d}))\leq\lnorm{2}{f_{k}(x)-f(x)}\leq \lnorm{\infty}{{f}_{k}-f},
			\]
		as required.
\end{proof}

\begin{lemma}\label{lemma:Gammak}
	Let $\rho\colon [0,1]^{d}\to (0,\infty)$ be a measurable function with $0<\inf\rho\leq\sup\rho<\infty$ and $\niceint{[0,1]^{d}}{\rho}{\leb}=1$. Let $(S_{k})_{k\in\N}$ be a sequence of pairwise disjoint cubes in $\R^{d}$ such that the sidelength $l_{k}\in\N$ of $S_{k}$ is unbounded and increasing. Let $(\phi_{k})_{k\in\N}$ denote the sequence of affine mappings $\phi_{k}$ with scalar linear part $l_{k}$ and $\phi_{k}([0,1]^{d})=S_{k}$. Then there exists a sequence $(\Xi_{k})_{k\in\N}$ of finite sets $\Xi_{k}\subseteq S_{k}$ with the following properties:
	\begin{enumerate}[(i)]
		\item\label{prop:crd} $\abs{\Xi_{k}}=l_{k}^{d}$ for every $k\in\N$,
		\item\label{prop:sep_net} $\bigcup_{k\in\N}\Xi_{k}$ is a separated net of $\bigcup_{k\in\N}S_{k}$,
		\item\label{prop:wk_cnv} The sequence $(\mu_{k})_{k\in\N}$, where $\mu_{k}$ is the normalised counting measure on the set $\phi_{k}^{-1}(\Xi_{k})$, converges weakly to $\rho\leb$.
	\end{enumerate}
\end{lemma}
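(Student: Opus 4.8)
The plan is to build all the sets $\Xi_k$ from a single bi-Lipschitz transport map of the unit cube, applied to finer and finer uniform grids and then rescaled by the maps $\phi_k$. The key preliminary step is to record the following standard fact, which is exactly where the hypotheses $0<\inf\rho\le\sup\rho<\infty$ and $\int_{[0,1]^d}\rho\,d\leb=1$ are used: there exists a bijection $T\colon[0,1]^d\to[0,1]^d$ that is bi-Lipschitz with a constant $L=L(d,\inf\rho,\sup\rho)\ge1$ and satisfies $T_{\sharp}(\rest{\leb}{[0,1]^d})=\rho\leb$. Since $\rest{\leb}{[0,1]^d}$ and $\rho\leb$ are probability measures on $[0,1]^d$ with densities bounded above and below, one can take $T$ to be the Knothe--Rosenblatt rearrangement (or any other coordinatewise monotone transport), whose coordinates are one-dimensional monotone transports $F^{-1}$ of conditional marginal densities; each such density lies in the fixed compact interval $\bigl[\tfrac{\inf\rho}{\sup\rho},\tfrac{\sup\rho}{\inf\rho}\bigr]\subset(0,\infty)$, so every $F^{-1}$, hence also $T$, is bi-Lipschitz with a constant depending only on $d$ and the bounds on $\rho$. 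As a bi-Lipschitz (hence closed) self-map of $[0,1]^d$ whose image has full Lebesgue measure, $T$ is automatically onto $[0,1]^d$ and maps $\partial[0,1]^d$ onto $\partial[0,1]^d$; in particular $\dist(T(x),\partial[0,1]^d)\ge L^{-1}\dist(x,\partial[0,1]^d)$ for every $x$.

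With $T$ fixed, I would take $\Lambda_k:=\set{\tfrac1{l_k}(i-\tfrac12\mb 1):i\in\set{1,\dots,l_k}^d}$ to be the centred uniform grid in $[0,1]^d$, which has $\abs{\Lambda_k}=l_k^d$, is $\tfrac1{l_k}$-separated, is a $\tfrac{\sqrt d}{2l_k}$-net of $[0,1]^d$, and satisfies $\dist(\Lambda_k,\partial[0,1]^d)\ge\tfrac1{2l_k}$, and then set $\Gamma_k:=T(\Lambda_k)$ and $\Xi_k:=\phi_k(\Gamma_k)\subseteq S_k$. Property~\eqref{prop:crd} is then immediate from the injectivity of $T$ and $\phi_k$. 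For~\eqref{prop:sep_net}, the bi-Lipschitz bounds on $T$ and the boundary estimate above give that $\Gamma_k$ is $\tfrac1{Ll_k}$-separated, a $\tfrac{L\sqrt d}{2l_k}$-net of $[0,1]^d$, and at distance at least $\tfrac1{2Ll_k}$ from $\partial[0,1]^d$; applying $\phi_k$, which scales all distances by $l_k$, shows $\Xi_k$ is $\tfrac1L$-separated, a $\tfrac{L\sqrt d}2$-net of $S_k$, and at distance at least $\tfrac1{2L}$ from $\partial S_k$. Since the cubes $S_k$ are pairwise disjoint, every point of $\Xi_k$ lies in the interior of $S_k$ and is therefore at distance at least $\tfrac1{2L}$ from each point outside $S_k$, in particular from each point of $\bigcup_{j\ne k}\Xi_j\subseteq\bigcup_{j\ne k}S_j$; hence $\bigcup_k\Xi_k$ is $\tfrac1{2L}$-separated, and it is a $\tfrac{L\sqrt d}2$-net of $\bigcup_kS_k$ because it is already a net of each $S_k$ separately. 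For~\eqref{prop:wk_cnv}, since $\phi_k^{-1}(\Xi_k)=\Gamma_k=T(\Lambda_k)$ with $T$ injective and continuous, for every continuous compactly supported $\varphi\colon\R^d\to\R$ one has
\[
\int\varphi\,\mathrm d\mu_k=\frac1{l_k^d}\sum_{\lambda\in\Lambda_k}(\varphi\circ T)(\lambda)\stackrel{k\to\infty}{\longrightarrow}\int_{[0,1]^d}(\varphi\circ T)\,\mathrm d\leb=\int\varphi\,\mathrm d(T_{\sharp}\leb)=\int\varphi\,\mathrm d(\rho\leb),
\]
the limit being the convergence of Riemann sums for the continuous function $\varphi\circ T$ on $[0,1]^d$; thus $\mu_k\rightharpoonup\rho\leb$.

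I expect the only genuine obstacle to be the construction of the transport map $T$ with a bi-Lipschitz constant controlled solely by $d$, $\inf\rho$ and $\sup\rho$; everything after that is bookkeeping of Lipschitz constants together with a Riemann-sum limit. If one would rather not quote properties of the Knothe map, the same $T$ can be written down directly, transporting first the first marginal of $\rho\leb$ on $[0,1]$ and then the conditional marginals in the remaining coordinates by one-dimensional monotone maps, verifying at each stage that the densities involved stay trapped in $\bigl[\tfrac{\inf\rho}{\sup\rho},\tfrac{\sup\rho}{\inf\rho}\bigr]$, so that $T$ is bi-Lipschitz with a uniform constant.
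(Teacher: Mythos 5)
There is a fatal gap at the very first step: the ``standard fact'' you invoke --- existence of a bijection $T\colon[0,1]^d\to[0,1]^d$ with $T_\sharp(\rest{\leb}{[0,1]^d})=\rho\leb$ that is bi-Lipschitz with a constant depending only on $d$, $\inf\rho$ and $\sup\rho$ --- is false for merely \emph{measurable} densities $\rho$, and the lemma assumes nothing more than measurability. The Knothe--Rosenblatt map does not help here: its first coordinate is indeed a bi-Lipschitz monotone rearrangement, but the later coordinates are conditional monotone transports whose dependence on the earlier coordinates is only as regular as $\rho$ itself; for measurable $\rho$ the map $x_1\mapsto F^{-1}_{x_1}(x_2)$ need not even be continuous, let alone Lipschitz with a constant controlled by the bounds on $\rho$. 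Worse, the failure is not just a defect of the Knothe construction but of the whole approach: if \emph{any} bi-Lipschitz bijection $T$ of $[0,1]^d$ with $T_\sharp\leb=\rho\leb$ existed, then $\Phi:=T^{-1}$ would be a bilipschitz solution of $\Phi_\sharp\rho\leb=\rest{\leb}{\Phi([0,1]^d)}$. But the lemma is applied (in Construction~\ref{const_X_psi} and Lemmas~\ref{lemma:bilip_nonequiv} and~\ref{lemma:BK}, hence in the proof of Theorem~\ref{thm:bil_neq_slow_disp}) precisely to Burago--Kleiner densities $\rho$ for which no such bilipschitz solution exists. So for the densities that matter, your $T$ cannot exist, and with it collapse all three claimed properties: the separation and net bounds for $\Xi_k$ (which you derive from the bi-Lipschitz constants of $T$) and the Riemann-sum argument for weak convergence (which needs at least continuity of $\varphi\circ T$).

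The paper's proof avoids any transport map. It partitions $S_k$ into $m_k^d$ equal subcubes with $m_k=\floor*{\sqrt{l_k}}$, assigns to each subcube $T_{k,i}$ an integer $n_{k,i}$ equal to $l_k^d\int_{\phi_k^{-1}(T_{k,i})}\rho\,d\leb$ rounded up or down so that $\sum_i n_{k,i}=l_k^d$ exactly (this is property~\eqref{prop:crd}), and then places the $n_{k,i}$ points inside $T_{k,i}$ at centres of dyadic subcubes, which keeps the separation and net constants bounded independently of $k$ because $\inf\rho$ and $\sup\rho$ control $n_{k,i}$ relative to the volume of $T_{k,i}$. Weak convergence then follows since each subcube's mass is matched up to an additive error of one point out of roughly $l_k^{d/2}$, and the rescaled subcubes $\phi_k^{-1}(T_{k,i})$ have diameter tending to zero. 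If you want to salvage your strategy, you would have to replace the bi-Lipschitz transport by such a purely combinatorial mass-matching argument; as written, the proposal proves the lemma only for the special densities admitting bi-Lipschitz transports, which is exactly the class excluded in its intended use.
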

\begin{proof}
	If property \eqref{prop:crd} is omitted, the proof is contained in \cite[Proof of Lemma~2.1]{BK1}; similar constructions are also given in \cite{DKK2018} and \cite{dymond_kaluza2019highly}. Getting property \eqref{prop:crd} only requires taking a little extra care in the construction of \cite[Proof of Lemma~2.1]{BK1}. Therefore, we present only minimal details here; the calculations and the verification of \eqref{prop:crd}--\eqref{prop:wk_cnv} are left to the reader.
	
	Let $m_{k}:=\floor*{\sqrt{l_{k}}}$ for $k\in\N$. Fix $k\in\N$. We describe how to obtain the set $\Xi_{k}\subseteq S_{k}$. Consider the standard partition $(T_{k,i})_{i\in[m_{k}^{d}]}$ of the cube $S_{k}$ into $m_{k}^{d}$ subcubes of equal size and choose a sequence $(n_{k,i})_{i\in[m_{k}^{d}]}$ satisfying
	\begin{align*}
		&n_{k,i}\in\set{\floor*{l_{k}^{d}\niceint{\phi_{k}^{-1}(T_{k,i})}{\rho}{\leb}},\floor*{l_{k}^{d}\niceint{\phi_{k}^{-1}(T_{k,i})}{\rho}{\leb}}+1},\,i\in[m_{k}^{d}],\\
		&\sum_{i\in [m_{k}^{d}]}n_{k,i}=l_{k}^{d}.
	\end{align*}
	It is now enough to define $\Xi_{k}$ so that $\abs{\Xi_{k}\cap T_{k,i}}=n_{k,i}$ for all $i\in[m_{k}^{d}]$ and the separation and net constants of $\Xi_{k}$ in $S_{k}$ may be bounded respectively below and above independently of $k$. For each $i\in[m_{k}^{d}]$, we suggest the following prescription of the set $\Xi_{k}\cap T_{k,i}$: imagine we have a pot containing $n_{k,i}$ points. In the first step, we take one point out of the pot and place it at the centre of the cube $T_{k,i}$. Assume now that $j\geq 1$ and that after $j$ steps we have placed exactly one point from the pot at the centre of each cube in each of the the first $j-1$ dyadic partitions of the cube $T_{k,i}$. In step $j+1$, we consider the $j$th dyadic partition of $T_{k,i}$ and arbitrarily transfer remaining points from the pot onto the vacant centres of each of the $2^{dj}$ cubes in this partition until either the pot is empty or all of the $2^{dj}$ centres are occupied. When the pot is empty, the procedure terminates and the placement of the $n_{k,i}$ points determines the set $\Xi_{k}\cap T_{k,i}$. 
\end{proof}

\begin{construction}\label{const_X_psi}
	Let $\rho\colon [0,1]^{d}\to (0,\infty)$ be a measurable function with $0<\inf\rho\leq\sup\rho<\infty$ and $\niceint{[0,1]^{d}}{\rho}{\leb}=1$, $\mathfrak{l}=(l_{k})_{k\in\N}$ be a strictly increasing sequence of natural numbers and $\psi\colon (0,\infty)\to(0,\infty)$ be an increasing function. We define a separated net $X(\rho,\mathfrak{l},\psi)$ as follows: Let
	\[
	U_{k}:=[0,l_{k}^{2}]^{d},\qquad k\in\N.
	\]
	and choose arbitrarily a mapping  $g_{\psi}\colon \bigsqcup U_{k}\to \R^{d}$ such that $g_{\psi}$ and the sequence $(U_{k})_{k\in\N}$ satisfy the conditions \eqref{l_i:small_dist} and \eqref{l_i:piecewise_trans} of Lemma~\ref{lemma:miss_inft_many} and additionally $\mb{0}\in g_{\psi}(U_{1})$. Set $R_{k}:=g_{\psi}(U_{k})$ for each $k\in\N$. Next, fix a sequence $(S_{k})_{k\in\N}$ of cubes such that each $S_{k}$ has sidelength $l_{k}$, $S_{k}\subseteq R_{k}$, $\dist(S_{k},\R^{d}\setminus R_{k})\geq \frac{l_{k}^{2}}{4}$ and the vertices of $\partial S_{k}$ belong to the lattice $\frac{1}{2}\Z^{d}\setminus \Z^{d}$. Let $(\Xi_{k})_{k\in\N}$ be the sequence of finite sets $\Xi_{k}\subseteq S_{k}$ given by Lemma~\ref{lemma:Gammak}. Finally, we define the separated net $X(\rho,\mathfrak{l},\psi)$ by
	\[
	X(\rho,\mathfrak{l},\psi):=\bigcup_{k\in\N}\Xi_{k}\cup\left(\Z^{d}\setminus \bigcup_{k\in\N} S_{k}\right).
	\]	
	\end{construction}

\begin{lemma}\label{lemma:bilip_nonequiv}
	Let $\rho\colon [0,1]^{d}\to (0,\infty)$ be a measurable function with $0<\inf\rho\leq\sup\rho<\infty$ and the property that the equation $\Phi_{\sharp}\rho\leb=\leb|_{\Phi([0,1]^{d})}$ has no bilipschitz solutions $\Phi\colon [0,1]^{d}\to\R^{d}$. Let $\mathfrak{l}=(l_{k})_{k\in\N}$ be a strictly increasing sequence of natural numbers. Let $\psi_{1},\psi_{2}\colon (0,\infty)\to(0,\infty)$ be increasing functions such that $\psi_{2}(R+K)\in o(\psi_{1}(R))$ for any fixed $K\in\N$ and $l_k^2\in o(\psi_2(k))$. Then the separated nets
	\[
	X_{i}:=X(\rho,\mathfrak{l},\psi_{i}),\qquad i=1,2,
	\]
	given by Construction~\ref{const_X_psi} are bilipschitz non-equivalent.
\end{lemma}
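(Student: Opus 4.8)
The plan is to assume, for a contradiction, that $F\colon X_1\to X_2$ is a bilipschitz bijection with bilipschitz constant $L\ge 1$, and to reach a contradiction by feeding $F$ first into Lemma~\ref{lemma:miss_inft_many} and then into Lemma~\ref{lemma:BK}. Write $R_k^{(j)}:=g_{\psi_j}(U_k)$ for the cubes used by Construction~\ref{const_X_psi} when building $X_j$, and note that the \emph{same} sequence $(U_k)_{k\in\N}$, $U_k=[0,l_k^2]^d$, is used for both $j=1,2$, since $\mathfrak l$ is common. (Assuming, as we may, that $\int_{[0,1]^d}\rho\,d\leb=1$ --- this is needed for $X(\rho,\mathfrak l,\psi_j)$ to be defined, and is attainable by rescaling $\rho$, which preserves the hypothesis on $\rho$, as one sees by composing a hypothetical solution of the rescaled equation with a dilation.)

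First I would observe that $g_{\psi_1}$ and $(U_k)_k$ satisfy condition~\eqref{l_i:small_dist} of Lemma~\ref{lemma:miss_inft_many} with $\psi_1$, hence in particular condition~\eqref{l_i:large_dist} with $\psi_1$, as well as \eqref{l_i:piecewise_trans}; that $g_{\psi_2}$ satisfies \eqref{l_i:small_dist} with $\psi_2$; and that $\diam U_k=l_k^2\sqrt d$ is increasing and, since $l_k^2\in o(\psi_2(k))$, lies in $o(\psi_2(k))$. As $\psi_2(R+K)\in o(\psi_1(R))$ for each fixed $K$, Lemma~\ref{lemma:miss_inft_many} (with $D:=X_1$, $g_1:=g_{\psi_1}$, $g_2:=g_{\psi_2}$, and our $F$) produces an infinite $I\subseteq\N$ with $F\bigl(X_1\cap\bigcup_k R_k^{(1)}\bigr)\cap R_i^{(2)}=\emptyset$ for every $i\in I$. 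Rewriting this, for $i\in I$ one gets
\[
F^{-1}\bigl(X_2\cap R_i^{(2)}\bigr)\subseteq X_1\setminus\bigcup_k R_k^{(1)}=\Z^d\setminus\bigcup_k R_k^{(1)}\subseteq\Z^d,
\]
because $S_k^{(1)}\subseteq R_k^{(1)}$ and $X_1$ coincides with $\Z^d$ off $\bigcup_k S_k^{(1)}$. So each cluster $\Xi_i^{(2)}$, $i\in I$, which by Lemma~\ref{lemma:Gammak} carries the ``forbidden'' density $\rho$, is carried by the bilipschitz map $F^{-1}$ into the integer lattice.

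Next I would set up Lemma~\ref{lemma:BK}. After discarding the finitely many $i\in I$ with $l_i<4L\sqrt d$ and reindexing the rest as $\N$, I would use: the pairwise disjoint cubes $R_i:=R_i^{(2)}$ and $S_i:=S_i^{(2)}$ (here $2S_i\subseteq R_i$, since $\dist(S_i^{(2)},\R^d\setminus R_i^{(2)})\ge l_i^2/4\ge l_i/2$, and $\diam R_i,\diam S_i$ are unbounded and increasing as $\mathfrak l$ is strictly increasing); the affine maps $\phi_i$ of Construction~\ref{const_X_psi}; the finite sets $\Upsilon_i:=X_2\cap R_i^{(2)}$; and $h:=F^{-1}|_{\bigcup_i\Upsilon_i}$. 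The verifications that $\bigcup_i\Upsilon_i$ is a separated net of $\bigcup_i R_i^{(2)}$ (the net constant of $X_2$ is tiny compared with the sidelength $l_i^2$, and $S_i^{(2)}$ sits deep inside $R_i^{(2)}$), that $\Upsilon_i\cap S_i^{(2)}=\Xi_i^{(2)}$ with the normalised counting measure on $\phi_i^{-1}(\Xi_i^{(2)})$ converging weakly to $\rho\leb$ (Lemma~\ref{lemma:Gammak}), and that $h$ is injective with values in $\Z^d$ (by the displayed inclusion), are all routine.

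The main obstacle will be the buffer condition~\eqref{eq:buffer}: $B\bigl(F^{-1}(\Xi_i^{(2)}),\diam S_i^{(2)}\bigr)\cap\Z^d\subseteq F^{-1}(X_2\cap R_i^{(2)})$. Here is how I would attack it. Let $p\in\Z^d$ with $\enorm{p-q}\le\diam S_i^{(2)}=l_i\sqrt d$ for some $q\in F^{-1}(\Xi_i^{(2)})$, and set $y:=F(q)\in\Xi_i^{(2)}\subseteq S_i^{(2)}$. If $p\in X_1$ then $F(p)\in X_2$ and $\enorm{F(p)-y}\le Ll_i\sqrt d$, so $F(p)\in B(S_i^{(2)},Ll_i\sqrt d)\subseteq R_i^{(2)}$ (using $l_i\ge 4L\sqrt d$), giving $p\in F^{-1}(X_2\cap R_i^{(2)})$. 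Hence \eqref{eq:buffer} can fail only at $p\in\bigcup_k S_k^{(1)}$; and since $q\notin R_k^{(1)}$ forces $\dist(q,S_k^{(1)})\ge l_k^2/4$, such a $p$ can lie in $S_k^{(1)}$ only if $l_k^2/4\le l_i\sqrt d$, i.e.\ $l_k\le 2d^{1/4}\sqrt{l_i}$. Summing $l_k^d$ over these ``small'' clusters --- of which there are at most $2d^{1/4}\sqrt{l_i}$, since $k\le l_k$ --- bounds the failure set by a quantity of order $l_i^{(d+1)/2}$, which is $o(l_i^d)$ because $d\ge 2$. So \eqref{eq:buffer} holds up to $o(l_i^d)$ lattice points; I would finish by checking, through the final part of the proof of Lemma~\ref{lemma:BK}, that enlarging the sets there called $A_k$ by $o(l_k^d)$ points apiece does not disturb the weak convergence of $(\rest{f_k}{[0,1]^d})_\sharp\mu_k$ to $\leb|_{f([0,1]^d)}$, so that the conclusion $\sup_i\max\{\lip(h|_{\Upsilon_i}),\lip((h|_{\Upsilon_i})^{-1})\}=\infty$ of Lemma~\ref{lemma:BK} persists. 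Since $h|_{\Upsilon_i}$ restricts $F^{-1}$ and its inverse restricts $F$, both have bilipschitz constant at most $L$; this contradiction shows that $X_1$ and $X_2$ are bilipschitz non-equivalent.
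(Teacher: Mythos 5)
Your proposal is correct and follows the paper's strategy almost verbatim: assume a bilipschitz bijection, apply Lemma~\ref{lemma:miss_inft_many} (with $D=X_1$, $g_1=g_{\psi_1}$, $g_2=g_{\psi_2}$) to find infinitely many clusters $\Upsilon_i=X_2\cap R_i^{(2)}$ whose preimage under the bilipschitz map avoids $\bigcup_k R_k^{(1)}$ and hence lies in $\Z^d$, and then derive a contradiction through Lemma~\ref{lemma:BK}. The one point where you genuinely diverge is the buffer condition \eqref{eq:buffer}: you establish it only up to an exceptional set of at most $O\bigl(l_i^{(d+1)/2}\bigr)=o(l_i^d)$ lattice points (those sitting in small clusters $S_k^{(1)}$ with $l_k^2\leq 4\sqrt{d}\,l_i$), so you cannot invoke Lemma~\ref{lemma:BK} as a black box and must reopen its proof. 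Your sketch of why this is harmless is sound: the buffer hypothesis is used there only to obtain the inclusion \eqref{eq:Ak_inclusion} bounding \eqref{eq:second_term_bound}, and an exceptional set of $o(l_k^d)$ points adds at most $\lnorm{\infty}{\varphi}\,o(l_k^d)/l_k^d\to 0$ to that bound, so the conclusion \eqref{eq:sup_bil_cst} persists; note also that your count uses $d\geq 2$, which is harmless since for $d=1$ no $\rho$ as in the hypothesis exists. The paper instead verifies \eqref{eq:buffer} exactly, which is what allows it to apply Lemma~\ref{lemma:BK} as stated, by using the conclusion of Lemma~\ref{lemma:miss_inft_many} a second time: in your notation, if a lattice point $p$ within $\diam S_i^{(2)}$ of $h(\Upsilon_i\cap S_i^{(2)})$ lay in some $S_k^{(1)}$, choose $p'\in X_1\cap\bigcup_k R_k^{(1)}$ with $\enorm{p'-p}\leq b$, where $b$ is the net constant of $X_1\cap\bigcup_k R_k^{(1)}$ in $\bigcup_k R_k^{(1)}$; then for $w\in\Upsilon_i\cap S_i^{(2)}$ one has $\enorm{F(p')-w}\leq L\enorm{p'-h(w)}\leq L\bigl(b+(1+L)\sqrt{d}\,l_i\bigr)<l_i^2/4$, so $F(p')\in X_2\cap R_i^{(2)}=\Upsilon_i$ and hence $p'\in h(\Upsilon_i)\subseteq\Z^d\setminus\bigcup_k R_k^{(1)}$, contradicting the choice of $p'$. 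Adding this short argument would let you keep Lemma~\ref{lemma:BK} untouched; as written, your proof is complete provided you carry out the routine robustness check of Lemma~\ref{lemma:BK} that you describe.
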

\begin{proof}
	Assume that $X_{1}$ and $X_{2}$ are BL equivalent and let $f\colon X_{2}\to X_{1}$ be a bijection with
	\[
	L:=\max\set{\lip(f),\lip(f^{-1})}<\infty.
	\] 
	Let the sequences $(U_{k})_{k\in\N}$, $(R_{i,k}:=g_{\psi_{i}}(U_{k}))_{k\in\N}$, $(S_{i,k})_{k\in\N}$ and $(\Xi_{i,k})_{k\in\N}$ and the mapping $g_{i}:=g_{\psi_{i}}\colon \bigsqcup_{k\in\N}U_{k}\to\R^{d}$ be given by Construction~\ref{const_X_psi} with the setting $\psi=\psi_{i}$ for $i=1,2$.
	In particular, this means that
	\begin{equation}\label{eq:Xi}
	X_{i}=\bigcup_{k\in\N}\Xi_{i,k}\cup \left(\Z^{d}\setminus \bigcup _{k\in\N}S_{i,k}\right),\qquad i=1,2.
	\end{equation}
	Observe that the conditions of Lemma~\ref{lemma:miss_inft_many} are satisfied by $\psi_{1}$, $\psi_{2}$, $(U_{k})_{k\in\N}$, $g_{1}$, $g_{2}$, $F:=f^{-1}$ and $D:=X_{1}$. Therefore, by Lemma~\ref{lemma:miss_inft_many}, there is a subsequence $(U_{n_{k}})_{k\in\N}$ of $(U_{n})_{n\in\N}$ such that $f^{-1}\left(X_{1}\cap g_{1}\left(\bigcup_{n\in\N}U_{n}\right)\right)\cap g_{2}(U_{n_{k}})=\emptyset$ for every $k\in\N$. This translates to
	\begin{equation}\label{eq:missing}
	f(X_{2}\cap R_{2,n_{k}})\cap \bigcup_{n\in\N}R_{1,n}=\emptyset\text{ for every $k\in\N$.}
	\end{equation} 
	In what follows it is occasionally necessary to assume that the first index $n_{1}$ of the subsequence $U_{n_{k}}$ is chosen sufficiently large so that, for example, an inequality like $\frac{l_{n_{k}}^{2}}{4}>2l_{n_{k}}$ holds for all $k\in\N$. We will no longer mention this explicitly.
	
	For each $k\in\N$, we set $\widetilde{R}_{k}:=R_{2,n_{k}}$, $\widetilde{S}_{k}:=S_{2,n_{k}}$ and $\Upsilon_{k}:=X_{2}\cap R_{2,n_k}$. Observe that $\Upsilon_{k}\cap \widetilde{S}_{k}=\Xi_{2,n_k}$ and that $\Upsilon_{k}\cap (\widetilde{R}_{k}\setminus \widetilde{S}_{k})=\Z^{d}\cap \widetilde{R}_{k}\setminus \widetilde{S}_{k}$. Moreover, the function $f|_{\bigcup_{k\in\N}\Upsilon_{k}}$ has its image in $\Z^{d}$ due to $\Upsilon_{k}\subseteq\widetilde{R}_{k}=R_{2,n_{k}}$, \eqref{eq:missing}, \eqref{eq:Xi} and $S_{1,n}\subseteq R_{1,n}$. Thus, the only condition of Lemma~\ref{lemma:BK} which is not clearly satisfied by the sequences $\widetilde{R}_{k}$, $\widetilde{S}_{k}$, $\Upsilon_{k}$ and the function $h:=f|_{\bigcup_{k\in\N}\Upsilon_{k}}\colon \bigcup_{k\in\N}\Upsilon_{k}\to \Z^{d}$ is \eqref{eq:buffer}; we verify it shortly. However, first we point out that, once these conditions are verified, applying Lemma~\ref{lemma:BK} in the above setting gives that $h=f|_{\bigcup_{k\in\N}\Upsilon_{k}}$ and therefore also $f$ is not bilipschitz, which is the desired contradiction.

	 It therefore only remains to verify condition~\eqref{eq:buffer} of Lemma~\ref{lemma:BK} for $(\widetilde{R}_{k})_{k\in\N}$, $(\widetilde{S}_{k})_{k\in\N}$, $(\Upsilon_{k})_{k\in\N}$ and the function $h$. Let 
	 \[
	 v\in  B(f(\Upsilon_{k}\cap \widetilde{S}_{k}),\diam \widetilde{S}_{k})\cap \Z^{d}.
	 \]
	  We claim that $v\in X_{1}$. If $v$ is not in $X_{1}$ then, by the definition of $X_{1}$ in Construction~\ref{const_X_psi} and \eqref{eq:Xi}, we must have $v\in \bigcup_{n\in\N}S_{1,n}\subset \bigcup_{n\in\N}R_{1,n}$. Let $b$ denote the net constant of $X_{1}\cap\bigcup_{n\in\N}R_{1,n}$ in $\bigcup_{n\in\N}R_{1,n}$ and choose $v'\in X_{1}\cap \bigcup_{n\in\N}R_{1,n}$ so that $\enorm{v'-v}\leq b$. Let $u'\in X_{2}$ with $f(u')=v'$ and fix a point $w\in \Upsilon_{k}\cap \widetilde{S}_{k}$. Then
	 \begin{multline*}
	 \enorm{u'-w}\leq L\enorm{f(u')-f(w)}\leq L\left(\diam f(\Upsilon_{k}\cap \widetilde{S}_{k})+\diam \widetilde{S}_{k}+b\right)\\
	 \leq 3\sqrt{d}L^{2}l_{n_k}< \frac{l_{n_{k}}^{2}}{4}.
	 \end{multline*}
	 This bound on $\enorm{u'-w}$ together with $w\in \widetilde{S}_{k}$ and $\dist(\widetilde{S}_{k},\R^{d}\setminus \widetilde{R}_{k}))\geq\frac{l_{n_k}^{2}}{4}$ implies that $u'\in X_{2}\cap \widetilde{R}_{k}$. But, according to \eqref{eq:missing}, this in turn requires $v'=f(u')\notin \left(\bigcup_{n\in\N}R_{1,n}\right)$, contrary to the choice of $v'$. We conclude that $v\in X_{1}$.

	 Now, we can choose $z\in X_{2}$ such that $v=f(z)$. Then
	 \begin{multline*}
	 \enorm{z-w}\leq L\enorm{f(z)-f(w)}\leq L\left(\diam f(\Upsilon_{k}\cap \widetilde{S}_{k})+\diam \widetilde{S}_{k}\right)\\
	 \leq L^{2}\sqrt{d}l_{n_k}+L\sqrt{d}l_{n_{k}}< \frac{l_{n_{k}}^{2}}{4}\leq \dist(\widetilde{S}_{k},\R^{d}\setminus \widetilde{R}_{k}).
	\end{multline*}
	 It follows that $z\in X_{2}\cap\widetilde{R}_{k}=\Upsilon_{k}$ and so $v=f(z)\in f(\Upsilon_{k})$. 
\end{proof}

\begin{lemma}\label{lemma:disp_X_psi}
	Let $\rho\colon [0,1]^{d}\to (0,\infty)$ be a measurable function with $0<\inf\rho\leq\sup\rho<\infty$ and $\niceint{[0,1]^{d}}{\rho}{\leb}=1$, $\mathfrak{l}=(l_{k})_{k\in\N}$ be a strictly increasing sequence of natural numbers and $\psi\colon (0,\infty)\to(0,\infty)$ be an increasing function. Let $X_{\psi}:=X(\rho,\mathfrak{l},\psi)$ be the separated net given by Construction~\ref{const_X_psi} and $\phi\colon (0,\infty)\to (0,\infty)$ be an increasing, concave
	function such that $\phi(\psi(k))\in \Omega(l_{k})$. Then $\phi\in \disp_{R}(X_{\psi},\Z^{d})$.
\end{lemma}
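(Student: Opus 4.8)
The plan is to exploit that, by Construction~\ref{const_X_psi}, $X_\psi$ coincides with $\Z^d$ outside the pairwise disjoint cubes $S_k$, while inside $S_k$ it contributes the finite set $\Xi_k$, with $\abs{\Xi_k}=l_k^d=\abs{\Z^d\cap S_k}$ (the first equality by property~\eqref{prop:crd} of Lemma~\ref{lemma:Gammak}, the second by the placement of the vertices of $\partial S_k$ on the half-integer lattice in Construction~\ref{const_X_psi}). I would therefore define $f\colon X_\psi\to\Z^d$ by letting $f$ be the identity on $\Z^d\setminus\bigcup_k S_k$ and, on each $\Xi_k$, an arbitrary bijection $f_k\colon\Xi_k\to\Z^d\cap S_k$; since the $S_k$ are pairwise disjoint, $f$ is a well-defined bijection.

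Next I would estimate $\disp_R(f)$. For $x\in\Z^d\setminus\bigcup_k S_k$ we have $f(x)=x$, so only points $x\in\Xi_k$ contribute, and for these both $x$ and $f(x)$ lie in $S_k$, whence $\enorm{f(x)-x}\le\diam S_k=\sqrt d\,l_k$. The geometric input is that $S_k$ is far from the origin: $\mb{0}\in g_\psi(U_1)=R_1$, and property~\eqref{l_i:small_dist} of Lemma~\ref{lemma:miss_inft_many} for $g_\psi$ gives $\dist(R_k,R_1)\ge\dist\bigl(R_k,\bigcup_{j\ne k}R_j\bigr)=\psi(k)$ for every $k\ge2$; since $S_k\subseteq R_k$, every $z\in S_k$ satisfies $\enorm{z}\ge\psi(k)$ when $k\ge2$. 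Hence, if $\Xi_k\cap\cl{B}(\mb{0},R)\ne\emptyset$ for some $k\ge2$, then $R\ge\psi(k)$, so $\phi(R)\ge\phi(\psi(k))$ by monotonicity of $\phi$.

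To finish, use $\phi(\psi(n))\in\Omega(l_n)$ to fix $C\ge1$ and $n_0\in\N$ with $l_n\le C\phi(\psi(n))$ for all $n\ge n_0$; since $l_n\to\infty$ this also shows $\phi(\psi(n))\to\infty$, so $\phi$ is unbounded and there is $R_0>0$ with $\phi(R)\ge l_N$ for $R\ge R_0$, where $N:=\max\set{2,n_0}$. Then for $R\ge R_0$ and $x\in X_\psi\cap\cl{B}(\mb{0},R)$ one gets $\enorm{f(x)-x}\le\sqrt d\,C\phi(R)$ in all cases: if $x\in\Xi_k$ with $k\ge N$ the bound above gives $\enorm{f(x)-x}\le\sqrt d\,l_k\le\sqrt d\,C\phi(\psi(k))\le\sqrt d\,C\phi(R)$; if $x\in\Xi_k$ with $k<N$ then $\enorm{f(x)-x}\le\sqrt d\,l_N\le\sqrt d\,C\phi(R)$; and otherwise $\enorm{f(x)-x}=0$. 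Thus $\disp_R(f)\in O(\phi(R))$. The identical estimate, with $\Z^d\cap S_k$ in place of $\Xi_k$ and using again $S_k\subseteq R_k$, gives $\disp_R(f^{-1})\in O(\phi(R))$ as well. In particular $\phi\in\disp_R(X_\psi,\Z^d)$.

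I do not expect a genuine obstacle here: the proof essentially just reads off from Construction~\ref{const_X_psi} that $X_\psi$ and $\Z^d$ differ only inside the cubes $S_k$, each of which lies at distance at least $\psi(k)$ from $\mb{0}$, and then converts this into the displacement bound via $\phi(\psi(n))\in\Omega(l_n)$. The one point needing a little care is the cardinality match $\abs{\Z^d\cap S_k}=\abs{\Xi_k}$; it is exactly what the half-integer placement of the vertices of $S_k$ guarantees, and in any case a minor modification of $f_k$ (reassigning $O(l_k^{d-1})$ lattice points from an $O(1)$-neighbourhood of $S_k$ inside $R_k$) would absorb any such discrepancy without worsening the $O(l_k)$ bound.
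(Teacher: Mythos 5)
Your proposal is correct and follows essentially the same route as the paper: the same bijection (identity off the cubes $S_k$, an arbitrary matching $\Xi_k\to\Z^d\cap S_k$ inside each, justified by the cardinality match), the same geometric fact that $S_k$ lies outside $B(\mb{0},\psi(k))$ for $k\geq 2$, and the same conversion of $\diam S_k=\sqrt{d}\,l_k$ into an $O(\phi(R))$ bound via $\phi(\psi(n))\in\Omega(l_n)$. The extra estimate for $f^{-1}$ is harmless but not needed, since Definition~\ref{def:disp_eq} only asks for a one-sided displacement bound.
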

\begin{proof}
	The conditions on the sidelength and the location of $S_{k}$ and on the size of $\abs{\Xi_k}$ in Construction~\ref{const_X_psi} and Lemma~\ref{lemma:Gammak}\eqref{prop:crd} ensure that $\abs{X_{\psi}\cap S_{k}}=\abs{\Z^{d}\cap S_{k}}$. Therefore, we may define a bijection $h\colon X_{\psi}\to \Z^{d}$ as follows: on the set $X_{\psi}\setminus \bigcup_{k\in\N}S_{k}$ we define $h$ as the identity. Finally, for each $k\in\N$ we define $h|_{X_{\psi}\cap S_{k}}$ arbitrarily as a bijection $X_{\psi}\cap S_{k}\to \Z^{d}\cap S_{k}$.
	
	The mapping $h$ defined above clearly satisfies 
	\begin{equation}\label{eq:disp_h_first}
	\sup_{x\in \overline{B}(\mb{0},R)}\lnorm{2}{h(x)-x}=\max_{x\in \overline{B}(\mb{0},R)\cap \bigcup_{k\in\N}S_{k}}\lnorm{2}{h(x)-x}\leq\max_{k\colon S_{k}\cap \overline{B}(\mb{0},R)\neq \emptyset}\diam S_{k}.
	\end{equation}
	On the other hand, the conditions of Construction~\ref{const_X_psi}, in particular the properties of the mapping $g_{\psi}$ coming from Lemma~\ref{lemma:miss_inft_many} and $\mb{0}\in g_{\psi}(U_{1})$, ensure that 
	\begin{equation}\label{eq:Sk_inclusion}
	S_{k}\subseteq \R^{d}\setminus B(\mb{0},\psi(k))
	\end{equation}
	for every $k>1$.
	Moreover, given $R>\inf_{x\in S_{2}}\enorm{x}$, there is a maximal $n\in\N$, $n\geq 2$ such that $\inf_{x\in S_{n}}\enorm{x}\leq R$. We infer, using \eqref{eq:Sk_inclusion}, that $R\geq \psi(n)$. This, in combination with \eqref{eq:disp_h_first} implies
	\[
	\frac{\sup_{x\in \overline{B}(0,R)}\lnorm{2}{h(x)-x}}{\phi(R)}\leq \frac{\max_{k\in[n]}\diam S_{k}}{\phi(\psi(n))}= \frac{\sqrt{d}l_{n}}{\phi(\psi(n))}\in O(1).\qedhere
	\]
\end{proof}
Putting together Lemmas~\ref{lemma:bilip_nonequiv} and \ref{lemma:disp_X_psi} it is easy to finish the proof of Theorem~\ref{thm:bil_neq_slow_disp}:
\begin{proof}[Proof of Theorem~\ref{thm:bil_neq_slow_disp}]
		Let $\rho\colon [0,1]^{d}\to (0,\infty)$ be a measurable function with $0<\inf\rho\leq\sup\rho<\infty$ and the property that the equation $\Phi_{\sharp}\rho\leb=\leb|_{\Phi([0,1]^{d})}$ has no bilipschitz solutions $\Phi\colon [0,1]^{d}\to\R^{d}$. Let $\mathfrak{l}=(l_{k})_{k\in\N}$ be a strictly increasing sequence of natural numbers. Let $\Lambda'$ denote the collection of all increasing functions $\psi\colon (0,\infty)\to(0,\infty)$ for which $\phi(\psi(k))\in \Omega(l_{k})$
	and $l_k^2\in o(\psi(k))$. For each $\psi\in \Lambda'$ let $X_{\psi}:=X(\rho,\mathfrak{l},\psi)$ be the separated net of $\R^{d}$ given by Construction~\ref{const_X_psi}. Define an equivalence relation $\sim$ on $\Lambda'$ by $\psi_{1}\sim \psi_{2}$ if $X_{\psi_{1}}$ and $X_{\psi_{2}}$ are BL equivalent. Finally, we may define $\Lambda:=\Lambda'/\sim$. The assertions of the theorem are now readily verified using Lemmas~\ref{lemma:bilip_nonequiv} and \ref{lemma:disp_X_psi}.
\end{proof}
	
%%%%%%%%%%%%%%%%%%%%%%%%%%%%%%%%%%%
%%%%%%% 	bi_omeg_hierar	%%%%%%%
%%%%%%%%%%%%%%%%%%%%%%%%%%%%%%%%%%%
	\section{Hierarchy of \texorpdfstring{$\omega$}{\unichar{"1D714}}-regularity of separated nets.}\label{sec:omeg_reg}
Here we prove Theorem~\ref{thm:distinct} and Corollary~\ref{cor:3distinct}. The statements are repeated for the reader's convenience. We also recall from the introduction that for two moduli of continuity $\omega_{1},\omega_{2}$, in the sense of Definition~\ref{def:omeg_reg}, satisfying $\omega_{2}\in o(\omega_{1})$ the notion of $\omega_1$-regularity is formally weaker than $\omega_2$-regularity. That is, the set of $\omega_{2}$-regular separated nets is contained in the set of $\omega_{1}$-separated nets.
\distinct
\begin{proof}
	Define $\phi\colon (0,\infty)\to (0,\infty)$ by $\phi(t)=(\log t)^{\alpha_{0}}$. Then, by Theorem~\ref{thm:bil_neq_slow_disp} there is a separated net $X\subseteq \R^{d}$ which is BL non-equivalent to the integer lattice $\Z^{d}$, but for which $\disp_{R}(X,\Z^{d})\cap O(\phi(R))\neq \emptyset$.
At the same time, \cite[Theorem~1.2 \& Proposition~1.3]{dymond_kaluza2019highly} assert that there are $\omega$-irregular separated nets $Y\subseteq \R^{d}$ and that all such separated nets $Y$ satisfy $\disp_{R}(Y,\Z^{d})\cap O(\phi(R))=\emptyset$.
We conclude that the separated net $X$ must be $\omega$-regular.
\end{proof}
\threedistinct
\begin{proof}
	Let $\omega_{2}(t)=t\left(\log \frac{1}{t}\right)^{\alpha_{0}}$, where $\alpha_{0}=\alpha_{0}(d)$ is given by Theorem~\ref{thm:distinct}, $\omega_{3}(t)=t$ and let $X\subseteq \R^{d}$ be a separated net given by \cite[Theorem~1.2]{dymond_kaluza2019highly}, meaning that $X$ is both $\omega_{2}$- and $\omega_{3}$-irregular. According to \cite[Theorem~5.1]{McM} there exists a H\"older modulus of continuity $\omega_{1}(t)=t^{\beta}$ for some $\beta\in (0,1)$ such that $X$ is $\omega_{1}$-regular. In light of Theorem~\ref{thm:distinct}, it is now clear that the sets of $\omega_i$-regular separated nets in $\R^d$, $i\in[3]$, are pairwise distinct. 
\end{proof}

	\section*{Declarations}
	\paragraph{Funding.} This work was started while both authors were employed at the University of Innsbruck and enjoyed the full support of Austrian Science Fund (FWF): P 30902-N35. It was continued when the first named author was employed at University of Leipzig and the second named author was employed at Institute of Science and Technology of Austria, where he was supported by an IST Fellowship.
	
%	\paragraph{Data Availability Statement.} Data sharing not applicable to this article as no datasets were generated or analysed during the current study.
	
	\bibliographystyle{plain}
	\bibliography{a_main}

\begin{thebibliography}{10}

\bibitem{BK1}
D.~Burago and B.~Kleiner.
\newblock Separated nets in {Euclidean} space and {Jacobians} of {biLipschitz}
  maps.
\newblock {\em Geometric and Functional Analysis}, 8:273--282, 1998.
\newblock \url{http://dx.doi.org/10.1007/s000390050056}.

\bibitem{CortezNavas}
M.~I. Cortez and A.~Navas.
\newblock Some examples of repetitive, nonrectifiable {D}elone sets.
\newblock {\em Geometry \& Topology}, 20(4):1909--1939, 2016.
\newblock \url{http://dx.doi.org/10.2140/gt.2016.20.1909}.

\bibitem{dymond_kaluza2019highly}
M.~Dymond and V.~Kalu{\v{z}}a.
\newblock Highly irregular separated nets.
\newblock {\em Israel Journal of Mathematics}, 253(2):501--554, 2023.
\newblock \url{https://doi.org/10.1007/s11856-022-2448-6}.

\bibitem{DKK2018}
M.~Dymond, V.~Kalu{\v{z}}a, and E.~Kopeck{\'a}.
\newblock {Mapping $n$ grid points onto a square forces an arbitrarily large
  Lipschitz constant}.
\newblock {\em Geometric and Functional Analysis}, 28(3):589--644, 2018.
\newblock \url{https://doi.org/10.1007/s00039-018-0445-z}.

\bibitem{Frettloeh_et_al-BD2019}
D.~Frettlöh, Y.~Smilansky, and Y.~Solomon.
\newblock Bounded displacement non-equivalence in substitution tilings.
\newblock {\em Journal of Combinatorial Theory, Series A}, 177:105326, 2021.
\newblock \url{https://doi.org/10.1016/j.jcta.2020.105326}.

\bibitem{Grom}
M.~L. Gromov.
\newblock {\em Geometric Group Theory: Asymptotic invariants of infinite
  groups}.
\newblock London Mathematical Society lecture note series. Cambridge University
  Press, 1993.

\bibitem{Kirszbraun1934}
M.~Kirszbraun.
\newblock Über die zusammenziehende und {L}ipschitzsche {T}ransformationen.
\newblock {\em Fundamenta Mathematicae}, 22(1):77--108, 1934.
\newblock \url{http://eudml.org/doc/212681}.

\bibitem{Magazinov}
A.~N. Magazinov.
\newblock The family of bi-{L}ipschitz classes of {D}elone sets in {E}uclidean
  space has the cardinality of the continuum.
\newblock {\em Proceedings of the Steklov Institute of Mathematics},
  275(1):78--89, 2011.
\newblock \url{https://doi.org/10.1134/S0081543811080050}.

\bibitem{McM}
C.~T. McMullen.
\newblock Lipschitz maps and nets in {Euclidean} space.
\newblock {\em Geometric and Functional Analysis}, 8:304--314, 1998.
\newblock \url{http://dx.doi.org/10.1007/s000390050058}.

\bibitem{Rado-Hall_marriage}
R.~Rado.
\newblock Factorization of even graphs.
\newblock {\em The Quarterly Journal of Mathematics}, os-20(1):95--104, 1949.
\newblock \url{https://doi.org/10.1093/qmath/os-20.1.95}.

\bibitem{smilansky_solomon_2022}
Y.~Smilansky and Y.~Solomon.
\newblock A dichotomy for bounded displacement equivalence of {D}elone sets.
\newblock {\em Ergodic Theory and Dynamical Systems}, 42(8):2693–2710, 2022.
\newblock \url{https://doi.org/10.1017/etds.2021.60 }.

\end{thebibliography}
\end{document}